
\documentclass[12pt]{iopart}

\usepackage{epstopdf}% To incorporate .eps illustrations using PDFLaTeX, etc.
\usepackage{subfigure}% Support for small, `sub' figures and tables

\usepackage{latexsym,bm}
\usepackage{xcolor,graphicx}
\usepackage{eso-pic}
\usepackage{caption}
\usepackage{subfigure}
\newtheorem{lemma}{Lemma}[section]
\newtheorem{theorem}{Theorem}[section]

\newtheorem{proof}{Proof}
\newtheorem{remark}{Remark}[section]

\makeatletter

\@addtoreset{equation}{section}
\makeatother

%Uncomment next line if AMS fonts required
%\usepackage{iopams}
\begin{document}

\title{Unified analysis on Petrov-Galerkin method into Symm's integral of the first kind}
\author{Yidong Luo}
\address{School of Mathematics and Statistics, Wuhan University, Hubei Province, P. R. China}
\ead{Sylois@whu.edu.cn}
\vspace{10pt}
\begin{indented}
\item[]August 2017
\end{indented}

\begin{abstract}
    On bounded and simply connected planar analytic domain $ \Omega $, by $ 2\pi $ periodic parametric representation of boundary curve $ \partial \Omega $, Symm's integral equation of the first kind takes form $ K \Psi = g $, where $ K $ is seen as an operator mapping from $ L^2(0,2\pi) $ to itself. The classical result show complete convergence and error analysis in $ L^2 $ setting for least squares, dual least squares, Bubnov-Galerkin methods with Fourier basis when $ g \in H^r(0,2\pi), \ r \geq 1 $.
    \newline \indent In this paper, weakening the boundary $ \partial \Omega $ from analytic to $ C^3 $ class, we maintain the convergence and error analysis from analytic case. Besides, it is proven that, when $ g \in H^r(0,2\pi), \ 0 \leq r < 1 $, the least squares, dual least squares, Bubnov-Galerkin methods with Fourier basis will uniformly diverge to infinity at first order. The divergence effect and optimality of first order rate are confirmed in an example.
\end{abstract}
%
% Uncomment for keywords
%\vspace{2pc}
%\noindent{\it Keywords}: XXXXXX, YYYYYYYY, ZZZZZZZZZ
%
% Uncomment for Submitted to journal title message
%\submitto{\JPA}
%
% Uncomment if a separate title page is required
%\maketitle
%
% For two-column output uncomment the next line and choose [10pt] rather than [12pt] in the \documentclass declaration
%\ioptwocol
%

\section{Introduction }
Symm's integral equation of the first kind plays an important role in solving the boundary value problem of Laplace equations and constructing conformal mapping. It is generally formulated as follows. 
\newline \indent Let $ \Omega $ be a bounded domain in $ \mathrm{R}^d, \ d \geq 2 $, with boundary $ \partial \Omega $ of class $ C^2 $. Find the density $ \psi $ that solves
\begin{displaymath}
 \int_{\Gamma} \psi (y) \kappa(x-y) ds(y) = f(x), \quad x \in \Gamma,
\end{displaymath}
with the kernel
\begin{displaymath}
\kappa(x):= \left\{
\begin{array}{rcl}
 - \frac{1}{\pi} \ln \vert x \vert, & &  {d = 2},  \\
\frac{1}{2\pi} \vert x \vert^{-1},  & &  {d \geq 3}.
\end{array}
\right.
\end{displaymath}
in a strong or weak sense.
\newline \indent There is a large amount of literatures on numerical consideration of (SIE),  for example,
 \newline \indent (a) Collocation and quolocation boundary element method into two-dimensional case with the boundary $ \Gamma $ being a Lipschitz curve, see [3]; for $ \Gamma $ a closed smooth curve, see [5,17,22,24,27]; for piecewise smooth curve, see [23].
  \newline \indent (b) Wavelet-based or trigonometric-based Galerkin method into two-dimensional case with the boundary $ \Gamma $ be analytic. See [18, Chapter 3.3] and [15].
 \newline \indent (c) Galerkin boundary element method into two-dimensional case with the boundary $ \Gamma $ be a closed smooth curve, see [27]; for $ \Gamma $ be a Lipschitz curve, see [1,3]; especially for $ \Gamma $ consists of a finite number of smooth arcs of finite length, see [26].
 \newline \indent (d) Galerkin boundary element method into three-dimensional case, See [2,4,14,16], and even higher dimensions, see [13].
 \newline \indent (e) Some other methods, for example, multigrid method, see [25].
\newline \indent We are mostly interested in the numerical analysis of Petrov-Galerkin methods under Fourier basis for planar (SIE)(See [18, Chapter 3.3]). In this paper, we strengthen the result of convergence and error analysis in [18, Chapter 3.3], and develop a systematic divergence analysis, which is not found in literatures so far. We restrict $ \Omega \subset \mathrm{R}^2 $ to be some bounded, simply connected region and $ \Gamma =\partial \Omega $ be a closed curve of $ C^3 $ ($ C^k $) class, that is, the boundary $ \partial \Omega $ has a $ 2\pi $ periodic, three ($ k -th $) times continuously differentiable parametrization of the form
\begin{equation}
\boldsymbol{x}=\boldsymbol{\gamma}(s) = (a(s), b(s)), \quad s \in [0,2\pi],
\end{equation}
 Throughout this paper, we assume that the representation satisfies $ \vert \dot {\boldsymbol{\gamma}} (s) \vert > 0 $ for all $ s \in [0,2\pi] $.
 \newline \indent Now the Symm's integral equation of the first kind we address in a strong sense is formulated as, to determine the density $ \psi \in C(\partial \Omega) $ that solves
\begin{equation}
 - \frac{1}{\pi} \int_{\partial \Omega} \psi (y) \ln \vert x - y \vert ds(y) = f(x), \quad x \in \partial \Omega,
\end{equation}
where $ f \in C(\partial \Omega) $ is some given function (This problem directly arise from the Dirichlet problem in Laplace problem, see [18, Chapter 3.3] for the background). Insert the representation of curve boundary (1.1) into (1.2), then Symm's equation takes the form
\begin{equation}
 - \frac{1}{\pi} \int^{2\pi}_0 \Psi (s) \ln \vert \boldsymbol{\gamma}(t) - \boldsymbol{\gamma}(s) \vert ds = g(t), \quad x \in [0,2\pi],
\end{equation}
for the transformed density $ \Psi(s) := \psi(\boldsymbol{\gamma}(s))\vert \dot {\boldsymbol{\gamma}}(s)\vert $ and $ g(t) := f(\boldsymbol{\gamma}(t)), \ s \in [0,2\pi] $.

To make convenience for proceeding analysis, we further assume on the boundary $ \partial \Omega $ that there exists $ z_0 \in \Omega $ with $ \vert x - z_0 \vert \neq 1 $ for all $ x \in \partial \Omega $. This guarantees that $ \psi(x) \in C(\partial \Omega) $ solves (1.2) for $ f = 0 $ must be trivial (See Lemma 2.4), which ensure the injectivity of Symm's integral operator in (1.3) when considering it in background space $ H^s(0,2\pi) \ (-1 \leq s < 2) $ (Also there exists another formulation on tranfinite diameter of $ \Omega $ to ensure the injectivity, see [28]).
\newline \indent For research on Symm's integral equation of form (1.3), strenghthening $ \partial \Omega $ to be analytic, classical results in [18, Chapter 3.3] show complete convergence and error analysis in $ L^2 $ setting for three Petrov-Galerkin methods (least squares, dual least squares, Bubnov-Galerkin methods under Fourier basis) with $ \Psi \in H^r (0,2\pi), \ r \geq 0 $ (equivalent to $ g \in H^s (0,2\pi), \ s \geq 1 $).

 In this paper, weakening the boundary $ \partial \Omega $ from analytic to $ C^3 $ class, we maintain the convergence and error analysis from analytic case. Besides, we present a uniform divergence result for $ g \in H^r(0,2\pi), 0 \leq r <1 $  in $ L^2 $ setting which can be formulated as follows.
\begin{theorem} Let $ g \in L^2(0,2\pi) \setminus H^1(0,2\pi) $. Then the least squares, dual least squares,  Bubnov-Galerkin methods with trigonometric polynomial all diverge with first order rate (\textbf{optimal}) to infinity for (1.3).
\end{theorem}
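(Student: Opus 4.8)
The plan is to strip the Symm operator down to its Fourier-diagonal principal part, settle the divergence by an explicit computation there, and then carry the conclusion back across the smoothing remainder.

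First I would invoke the decomposition established in the mapping-property analysis above, $ K = K_0 + K_1 $, where $ K_0 $ carries the kernel $ -\frac{1}{2\pi}\ln\!\big(4\sin^2\frac{t-s}{2}\big) $ and is diagonalized by the trigonometric system, $ K_0 e^{ijt} = |j|^{-1} e^{ijt} $ for $ j \neq 0 $ (the constant mode being treated separately), while $ K_1 $ inherits $ C^2 $ kernel regularity from $ \boldsymbol{\gamma} \in C^3 $ and is therefore smoothing, $ K_1 : L^2 \to H^2 $ boundedly. In particular $ K : H^s \to H^{s+1} $ is an isomorphism on the relevant range of $ s $, so that $ g \in H^1 $ is \emph{equivalent} to $ K^{-1} g \in L^2 $; the hypothesis $ g \in L^2 \setminus H^1 $ is thus exactly the statement that no $ L^2 $ solution exists, which is the source of the blow-up.

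Next I would dispatch the diagonal model. Since $ K_0 $ is self-adjoint, positive and maps $ X_n := \mathrm{span}\{e^{ijt} : |j| \le n\} $ into itself, the least squares, dual least squares and Bubnov--Galerkin schemes all \emph{collapse to the same formula}: writing $ g = \sum_j \hat g_j e^{ijt} $, each returns $ \Psi_n^0 = \sum_{0 < |j| \le n} |j|\, \hat g_j e^{ijt} $, whence $ \|\Psi_n^0\|_{L^2}^2 = 2\pi \sum_{0 < |j| \le n} |j|^2 |\hat g_j|^2 $. Because $ g \in L^2 \setminus H^1 $ means $ \sum_j |\hat g_j|^2 < \infty $ but $ \sum_j |j|^2 |\hat g_j|^2 = \infty $, these partial sums tend to $ \infty $, which is the divergence; and the crude bound $ |j| \le n $ gives $ \|\Psi_n^0\|_{L^2} \le n \|g\|_{L^2} $, so the divergence is \emph{at most first order}.

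The hard part will be transferring this to the full operator $ K $, since $ X_n $ diagonalizes only $ K_0 $. For each scheme the coefficient vector solves a linear system on $ X_n $ with matrix $ M_n = D_n + E_n $, where $ D_n = \mathrm{diag}(|j|^{-1}) $ and $ E_n = \big(\langle K_1 e^{ikt}, e^{ijt}\rangle\big)_{|j|,|k|\le n} $; the smoothing of $ K_1 $ forces the entries of $ E_n $ to decay like $ |j|^{-2} $, with additional decay in $ k $. The obstruction is that $ \|D_n^{-1}\| \sim n \to \infty $, so $ E_n $ is \emph{not} small relative to $ D_n^{-1} $ and a naive Neumann series on $ I + D_n^{-1}E_n $ fails; one must instead exploit the genuine two-sided decay of the entries of $ K_1 $ through Schur / Hilbert--Schmidt estimates, combined with the stability of each scheme furnished by the convergence analysis, to show $ M_n^{-1} P_n g = D_n^{-1} P_n g + R_n $ with $ R_n $ of lower order, and hence $ \|\Psi_n\|_{L^2} \sim \|\Psi_n^0\|_{L^2} $. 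I would package this as a single lemma valid for all three methods at once, using the self-adjointness of $ K $ (its kernel is symmetric) to treat them uniformly; this is where the $ C^3 $ hypothesis is spent.

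Finally, for optimality, the uniform estimate $ \|\Psi_n\|_{L^2} \le C n \|g\|_{L^2} $ already shows first order cannot be exceeded. To see it is attained I would take $ g $ with $ |\hat g_j|^2 \sim |j|^{-1}(\log |j|)^{-2} $, which lies in $ L^2 \setminus H^1 $ and yields $ \|\Psi_n\|_{L^2}^2 \sim \sum_{|j|\le n} |j|/\log^2|j| \sim n^2/\log^2 n $, i.e. $ \|\Psi_n\|_{L^2} \sim n/\log n $; more generally $ |\hat g_j|^2 \sim |j|^{-1-2\alpha} $ with $ \alpha \downarrow 0 $ pushes the rate $ n^{1-\alpha} $ arbitrarily close to $ n $. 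Together with the upper bound this certifies first order as the optimal, unimprovable divergence rate and matches the numerical example.
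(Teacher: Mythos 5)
Your reduction to the diagonal model $K_0$ and your optimality examples are sound and in fact coincide with what the paper does in its Section 7 (the disc of radius $e^{-1/2}$, where $K=K_0$ and all three schemes literally coincide, with $|\hat g_j|^2\sim|j|^{-1-2\alpha}$ giving rate $n^{1-\alpha}$). The genuine gap is the step you yourself flag as ``the hard part'': transferring the blow-up from $K_0$ to the full operator $K$ on a general $C^3$ boundary. You propose to show $M_n^{-1}P_ng = D_n^{-1}P_ng + R_n$ with $R_n$ of lower order and conclude $\|\Psi_n\|\sim\|\Psi_n^0\|$, but this two-sided comparison is much stronger than what is needed and is not obviously attainable: Schur or Hilbert--Schmidt bounds on $E_n$ give upper bounds on the perturbation, whereas divergence requires a \emph{lower} bound on $\|\Psi_n\|$, and when $\|\Psi_n^0\|$ grows arbitrarily slowly (which it does for $g$ just barely outside $H^1$) an additive error term that is merely ``lower order than $n$'' can swamp it. As written, the transfer step is a plan, not a proof, and it is the entire content of the theorem for non-circular domains.

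The paper avoids this perturbation analysis altogether. Its route is: (i) verify the classical convergence hypotheses for each scheme (the stability estimate $\sigma_n(K)\le cn$ and the approximation condition (4.2), established via the mapping property $K:H^{-1}\to L^2$ and the projection error estimate), which yields $K_n^{-1}Q_nKx\to x$ for all $x\in L^2$ and hence, by Banach--Steinhaus, $\sup_n\|K_n^{-1}Q_nK\|<\infty$; (ii) invoke the abstract divergence principle for projection methods (Lemma 3.1, quoted from the Moore--Penrose approximation literature): under the completeness condition and this uniform bound, $\|K_n^{-1}Q_nb\|\to\infty$ for every $b\notin\mathcal{D}(K^\dagger)=\mathcal{R}(K)\oplus\mathcal{R}(K)^\perp$; (iii) identify $\mathcal{R}(K)=H^1(0,2\pi)$, dense in $L^2$, so that $\mathcal{D}(K^\dagger)=H^1$ and every $b\in L^2\setminus H^1$ diverges. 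The rate $O(n)$ then comes from $\|K_n^{-1}Q_n\|\le\sigma_n\le cn$, not from the diagonal model. If you want to salvage your outline, replace the matrix perturbation step by this uniform-boundedness argument: the convergence analysis you already cite as ``stability of each scheme'' is precisely what feeds the abstract lemma, and no comparison between $\Psi_n$ and $\Psi_n^0$ is ever required.
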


\indent As to the arrangement of the rest contents. In section 2, we introduce necessary preliminaries, such as periodic Sobolev space, basic properties of Symm's integral operator. In section 3, we give settings for unified Petrov-Galerkin, least squares, dual least squares, Bubnov-Galerkin methods respectively. In section 4,5,6, we analyze convergence, divergence and rate for above three specific Petrov-Galerkin settings respectively. In section 7, we give an example to confirm the first order to be uniformly optimal.
\section{Preliminaries}
\subsection{Periodic Sobolev space $ H^r(0,2\pi) $ and estimates}
Throughout this paper, we denote the $ 2\pi - $ periodic Sobolev space as $ H^r(0,2\pi) $. The following material can be found in [18,19].
\newline \indent For $ r \geq 0 $, the periodic Sobolev space $ H^r(0,2\pi) $ of order $ r $ is defined by
\begin{equation*}
H^r(0,2\pi) := \{ \sum_{k \in \mathrm{Z}} a_k e^{ikt}: \ \sum_{k \in \mathrm{Z}} (1+k^2)^r \vert a_k \vert^2 < \infty \}
\end{equation*}
We note that $ H^0 (0,2\pi) $ coincides with $ L^2(0,2\pi) $.
\newline \indent The Sobolev space $ H^r(0,2\pi) $ is a Hilbert space with the inner product defined by
\begin{equation}
(x,y)_{H^r} := \sum_{k \in \mathrm{Z}} (1+k^2)^r a_k \bar b_k ,
\end{equation}
where
\begin{equation*}
x(t) = \sum_{k \in \mathrm{Z}}  a_k e^{ikt} \quad \textrm{and} \quad y(t) = \sum_{k \in \mathrm{Z}}  b_k e^{ikt}.
\end{equation*}
The norm in $ H^r(0,2\pi) $ is given by
\begin{equation*}
\Vert x \Vert_{H^r} = (\sum_{k \in \mathrm{Z}} (1+k^2)^r \vert a_k \vert^2)^{\frac{1}{2}}.
\end{equation*}

\indent For $ r \geq 0 $, we denote by $ H^{-r} (0,2\pi) $ the dual space of $ H^r (0,2\pi) $, i.e., the space of all linear bounded functionals on $ H^r (0,2\pi) $.
\newline \indent By Riesz representation theorem, $ H^{-r} (0,2\pi) $ can be represented in terms of Fourier coefficients. For $ F \in H^{-r} (0,2\pi) $, define $ c_k := F(e^{ikt}) $ for $ k \in \mathrm{Z} $. Then the norm is given by
\begin{equation*}
\Vert F \Vert_{H^{-r}} = (\sum_{k \in \mathrm{Z}} (1+k^2)^{-r} \vert c_k \vert^2 )^{\frac{1}{2}}.
\end{equation*}
Conversely, let $ c_m \in \mathrm{C} $ satify
\begin{equation*}
 \sum_{k \in \mathrm{Z}} (1+k^2)^{-r} \vert c_k \vert^2  < \infty.
\end{equation*}
Then there exists a bounded linear functional $ F $ on $ H^r(0,2\pi) $ with $ F(e^{ikt}) = c_k $ for all $ k \in \mathrm{Z} $.
\newline \indent For each $ g \in L^2(0,2\pi) $ the sesquilinear duality pairing
\begin{equation}
G(\varphi) := \frac{1}{2\pi} \int^{2\pi}_0 \varphi(t) \bar g(t) dt, \quad \varphi \in H^r(0,2\pi),
\end{equation}
defines a linear functional $ G \in H^{-r} (0,2\pi) $. In this sense, $ L^2(0,2\pi) $ is a subspace of each dual space $ H^{-r} (0,2\pi) $, and the trigonometric polynomials are dense in $ H^{-r} (0,2\pi) $.
\newline \indent $ H^{-r} (0,2\pi) $ becomes a Hilbert space by appropriately extending the inner product (2.1) to negative order $ -r $, i.e., let
\begin{equation*}
(F,G)_{H^{-r}} = \sum_{k \in \mathrm{Z}} (1+k^2)^{-r} c_k \bar d_k
\end{equation*}
where $ c_k := F(e^{ikt}), \ d_k := G(e^{ikt}) $. For $ r = 0 $ the duality map (2.2)  is bijective with $ \Vert G \Vert_{H^{-0}} = \Vert g \Vert_{H^0} $. Therefore, we can identify $ H^{-0} $ and $ H^0 $ and obtain a Hilbert scale of Hilbert spaces $ \{H^r\}_{r \in \mathrm{R}} $.
\newline \indent Notice that, for $ r > s $, the Sobolev space $ H^r(0,2\pi) $ is a dense subspace of $ H^s(0,2\pi) $. The inclusion operator from $ H^r(0,2\pi) $ into $ H^s(0,2\pi) $ is compact.
\begin{lemma}
Let $ P_n : L^2 ( 0, 2\pi ) \longrightarrow X_n \subset L^2(0, 2\pi ) $ be an orthogonal projection operator, where $ X_n = span \{ e^{ikt} \}^n_{k = -n} $. Then \begin{math} P_n \end{math} is given as follows£º
\begin{equation*}
(P_n x)(t) = \sum^n_{k = -n} a_k e^{ikt}, \quad x \in L^2(0,2\pi),
\end{equation*}
where
\begin{equation*}
a_k = \frac{1}{2\pi}\int^{2\pi}_0 x(s) \exp(-iks) ds, \quad k \in \mathrm{N},
  \end{equation*}
 are the Fourier coefficients of $ x $. Furthermore, the following estimate holds:
 \begin{equation*}
 \Vert x- P_n x \Vert_{H^s} \leq \frac{1}{ n^{r-s} } \Vert x \Vert_{H^r} \quad x \in  \ H^r(0,2\pi),
 \end{equation*}
 where $ r \geq s $.
\end{lemma}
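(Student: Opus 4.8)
The plan is to treat the two assertions separately: the explicit form of $ P_n $ follows from the abstract theory of orthogonal projections onto a subspace spanned by an orthonormal family, while the approximation estimate is obtained by a direct manipulation of Fourier coefficients together with an elementary weight inequality.

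First I would recall that, under the normalized inner product $ (\varphi,\psi) = \frac{1}{2\pi}\int_0^{2\pi} \varphi \bar\psi \, dt $, the family $ \{e^{ikt}\}_{k\in\mathrm{Z}} $ is an orthonormal system in $ L^2(0,2\pi) $, so that $ \{e^{ikt}\}_{k=-n}^n $ is an orthonormal basis of the finite-dimensional subspace $ X_n $. Since the orthogonal projection onto a closed subspace spanned by an orthonormal family is the sum of the coordinate functionals against that family, we get $ P_n x = \sum_{k=-n}^n (x, e^{ikt}) e^{ikt} $, and $ (x, e^{ikt}) $ is precisely the Fourier coefficient $ a_k $. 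This establishes the stated formula for $ P_n x $ together with the expression for $ a_k $.

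For the error estimate I would write $ x(t) = \sum_{k\in\mathrm{Z}} a_k e^{ikt} $, so that the residual is the tail $ x - P_n x = \sum_{|k|>n} a_k e^{ikt} $. By the definition of the $ H^s $ norm,
\begin{equation*}
\Vert x - P_n x \Vert_{H^s}^2 = \sum_{|k|>n} (1+k^2)^s \vert a_k \vert^2 .
\end{equation*}
The key step is to split $ (1+k^2)^s = (1+k^2)^r (1+k^2)^{-(r-s)} $ and to exploit that every index in the tail satisfies $ |k| \geq n+1 $, whence $ 1+k^2 > n^2 $. Because $ r - s \geq 0 $, this gives $ (1+k^2)^{-(r-s)} \leq n^{-2(r-s)} $ uniformly over the summation range, so that
\begin{equation*}
\Vert x - P_n x \Vert_{H^s}^2 \leq \frac{1}{n^{2(r-s)}} \sum_{|k|>n} (1+k^2)^r \vert a_k \vert^2 \leq \frac{1}{n^{2(r-s)}} \Vert x \Vert_{H^r}^2 .
\end{equation*}
Taking square roots yields the claimed inequality.

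There is no genuine obstacle here: the only point demanding care is the weight estimate $ (1+k^2)^{-(r-s)} \leq n^{-2(r-s)} $ on the tail, which relies on $ r \geq s $ so that the exponent $ r-s $ is nonnegative and the map $ t \mapsto t^{r-s} $ is monotone (for $ r = s $ the bound is trivial with constant $ 1 $). I would also remark in passing that the same computation shows $ P_n $ is a bounded projection on every space $ H^r(0,2\pi) $ of the scale, a fact implicitly used in the later Petrov-Galerkin convergence analysis.
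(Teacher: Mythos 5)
Your proof is correct: the projection formula is the standard expansion against the orthonormal system $\{e^{ikt}\}$, and the tail estimate via $(1+k^2)^{-(r-s)} \leq n^{-2(r-s)}$ for $\vert k\vert > n$ is exactly the standard argument. The paper itself only cites [18, Theorem A.43] for this lemma, and your argument is precisely the proof behind that citation, so there is nothing to add.
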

\begin{proof}
See [18, Theorem A.43].
\end{proof}
\begin{lemma}
(Inverse inequality in $ H^r (0,2\pi) $):
Let $ r \geq s $. Then there exists a $ c > 0 $ such that
\begin{equation*}
\Vert \psi_n \Vert_{H^r} \leq c n^{r-s} \Vert \psi_n \Vert_{H^s}, \quad \forall \ \psi_n \in X_n
\end{equation*}
for all $ n \in \mathrm{N} $.
\end{lemma}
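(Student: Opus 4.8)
The plan is to reduce everything to the explicit Fourier-coefficient description of the two norms and to exploit the fact that on $ X_n $ only frequencies $ \vert k \vert \leq n $ occur. First I would write an arbitrary $ \psi_n \in X_n $ as $ \psi_n(t) = \sum_{k=-n}^{n} a_k e^{ikt} $, so that by the definition of the Sobolev norms
\begin{equation*}
\Vert \psi_n \Vert_{H^r}^2 = \sum_{k=-n}^{n} (1+k^2)^r \vert a_k \vert^2, \qquad \Vert \psi_n \Vert_{H^s}^2 = \sum_{k=-n}^{n} (1+k^2)^s \vert a_k \vert^2.
\end{equation*}
Both are finite sums, so no convergence issue arises even when $ s < 0 $, and both quantities are genuinely defined for $ \psi_n \in X_n $.

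Next, in the $ H^r $-sum I would split each weight as $ (1+k^2)^r = (1+k^2)^{r-s}(1+k^2)^s $. Since $ r - s \geq 0 $ and $ \vert k \vert \leq n $, the factor $ (1+k^2)^{r-s} $ is increasing in $ \vert k \vert $ and hence bounded above by its value at $ \vert k \vert = n $, namely $ (1+n^2)^{r-s} $. Pulling this uniform bound out of the sum gives
\begin{equation*}
\Vert \psi_n \Vert_{H^r}^2 \leq (1+n^2)^{r-s} \sum_{k=-n}^{n} (1+k^2)^s \vert a_k \vert^2 = (1+n^2)^{r-s} \Vert \psi_n \Vert_{H^s}^2.
\end{equation*}

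Finally I would convert $ (1+n^2)^{r-s} $ into a power of $ n $. For $ n \geq 1 $ one has $ 1 + n^2 \leq 2 n^2 $, whence $ (1+n^2)^{(r-s)/2} \leq 2^{(r-s)/2} n^{r-s} $; taking square roots in the previous display then yields the claim with $ c = 2^{(r-s)/2} $, a constant depending only on $ r-s $ and not on $ n $ (the case $ n = 0 $ being excluded, or trivial, since then $ X_0 $ is spanned by the constant).

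As for the main difficulty: there is essentially no analytic obstacle here, as the statement is merely the diagonal norm-equivalence on the finite-dimensional space $ X_n $ with explicit tracking of the $ n $-dependence. The only point requiring care is the bookkeeping of the exponent so that the factor emerges as $ n^{r-s} $ rather than $ n^{2(r-s)} $: the weights carry $ (1+k^2) $, whose square root is comparable to $ \vert k \vert $, so the correct power of $ n $ appears only after the final square root. One should also note that the estimate is uniform in the sign of $ s $, which is precisely what makes the inequality applicable later when the Symm operator is analysed across the negative part of the Sobolev scale.
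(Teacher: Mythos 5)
Your argument is correct: writing $\psi_n$ in the Fourier basis, factoring the weight as $(1+k^2)^r=(1+k^2)^{r-s}(1+k^2)^s$, bounding $(1+k^2)^{r-s}\leq(1+n^2)^{r-s}\leq 2^{r-s}n^{2(r-s)}$ for $\vert k\vert\leq n$, and taking square roots gives exactly the claimed inequality with $c=2^{(r-s)/2}$, uniformly in the sign of $s$. The paper itself offers no argument but only cites [18, Theorem 3.19], and your proof is precisely the standard one underlying that citation, so there is nothing further to compare.
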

\begin{proof}
See [18, Theorem 3.19].
\end{proof}
\subsection{Integral operator and regularity}
\begin{lemma}
Let $ r \in \mathrm{N} $ and $ k \in C^r([0,2\pi] \times [0,2\pi]) $ be $ 2\pi- $ periodic with respect to both variables. Then the integral operator $ K $, defined by
\begin{equation*}
(Kx)(t) := \int^{2\pi}_0 k(t,s) x(s) ds, \quad t \in (0,2\pi),
\end{equation*}
can be extended to a bounded operator from $ H^p(0,2\pi) $ into $ H^r(0,2\pi) $ for every $ -r \leq p \leq r $.
\end{lemma}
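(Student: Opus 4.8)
The plan is to reduce the whole range $-r\le p\le r$ to the single extremal case $p=-r$ and then to exploit the two ``directions'' of smoothness of the kernel separately: differentiability in the first variable $t$ will produce the $r$ derivatives of the output, while differentiability in the second variable $s$ will let the input be tested against the negative--order space $H^{-r}(0,2\pi)$. The reduction is immediate, since the inclusion $H^p(0,2\pi)\hookrightarrow H^{-r}(0,2\pi)$ is continuous for every $p\ge -r$: once $\Vert Kx\Vert_{H^r}\le C\Vert x\Vert_{H^{-r}}$ is known, it follows that $\Vert Kx\Vert_{H^r}\le C'\Vert x\Vert_{H^p}$ for all $p\in[-r,r]$. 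Because the trigonometric polynomials are dense in $H^{-r}(0,2\pi)$, it suffices to prove this bound for $x$ a trigonometric polynomial and then extend $K$ by continuity.

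For such an $x$ and each $0\le j\le r$ I would differentiate under the integral sign, which is legitimate because $k\in C^r$, to get
\begin{equation*}
(Kx)^{(j)}(t)=\int_0^{2\pi}\partial_t^{\,j}k(t,s)\,x(s)\,ds .
\end{equation*}
The key observation is that, for fixed $t$, the right--hand side is, up to conjugation and the factor in (2.2), the duality pairing of $x\in H^{-r}(0,2\pi)$ against the function $s\mapsto\partial_t^{\,j}k(t,s)$. The functional estimate $|F(\varphi)|\le\Vert F\Vert_{H^{-r}}\Vert\varphi\Vert_{H^r}$ then yields
\begin{equation*}
\bigl|(Kx)^{(j)}(t)\bigr|\le C\,\Vert x\Vert_{H^{-r}}\,\bigl\Vert\partial_t^{\,j}k(t,\cdot)\bigr\Vert_{H^r},
\end{equation*}
and squaring, integrating in $t$, and summing over $0\le j\le r$ (using the equivalence of $\Vert\cdot\Vert_{H^r}$ with the sum of the $L^2$--norms of derivatives up to order $r$) gives the desired bound $\Vert Kx\Vert_{H^r}\le C\Vert x\Vert_{H^{-r}}$.

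An equivalent and perhaps tidier bookkeeping expands $k(t,s)=\sum_{m,n\in\mathrm{Z}}k_{m,n}\,e^{imt}e^{ins}$ and reads off that $K$ acts on Fourier coefficients by $x\mapsto y$ with $y_m=2\pi\sum_{n\in\mathrm{Z}}k_{m,n}x_{-n}$. Boundedness $H^{-r}\to H^r$ is then the statement that the weighted matrix with entries $(1+m^2)^{r/2}(1+n^2)^{r/2}|k_{m,n}|$ is bounded on $\ell^2(\mathrm{Z})$, for which the Hilbert--Schmidt condition
\begin{equation*}
\sum_{m,n\in\mathrm{Z}}(1+m^2)^r(1+n^2)^r\,|k_{m,n}|^2<\infty
\end{equation*}
suffices; by Parseval this finiteness is exactly the assertion that the mixed derivatives of $k$ of order up to $r$ in each variable lie in $L^2([0,2\pi]^2)$.

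The step I expect to be the main obstacle is precisely the control of $\int_0^{2\pi}\Vert\partial_t^{\,j}k(t,\cdot)\Vert_{H^r}^2\,dt$ (equivalently, the two--variable coefficient sum displayed above): this quantity is finite exactly when the partial derivatives $\partial_t^{\,j}\partial_s^{\,i}k$ with $i,j\le r$ are square integrable, so it is here that the differentiability of the kernel in \emph{both} variables must be invoked, not merely the regularity in $t$ that produces the output derivatives. Once this uniform-in-$t$ $H^r$ bound on $\partial_t^{\,j}k(t,\cdot)$ is secured, the remaining ingredients --- differentiation under the integral, the duality estimate, and the density-and-inclusion reduction --- are routine.
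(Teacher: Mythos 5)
The paper does not actually prove this lemma; it simply cites [18, Theorem A.45], so there is no in-paper argument to compare against. Your route (reduce to the extremal case $p=-r$ by the continuous inclusion $H^p\hookrightarrow H^{-r}$, prove the bound on the dense set of trigonometric polynomials, and verify it via the Fourier-coefficient matrix $y_m=2\pi\sum_n k_{m,n}x_{-n}$ with the Hilbert--Schmidt sufficient condition) is the standard proof of the cited theorem, and the reduction, the density step, and the Cauchy--Schwarz estimate are all correct.

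The one step you explicitly leave open is, however, a genuine gap and not a routine verification: the finiteness of $\sum_{m,n}(1+m^2)^r(1+n^2)^r|k_{m,n}|^2$ is, as you say, equivalent to $\partial_t^{\,j}\partial_s^{\,i}k\in L^2([0,2\pi]^2)$ for all $0\le i,j\le r$, i.e.\ to derivatives of \emph{total} order up to $2r$. This does not follow from $k\in C^r([0,2\pi]^2)$ in the usual total-order sense, and the failure is not an artifact of the Hilbert--Schmidt sufficient condition being too strong: for a convolution kernel $k(t,s)=f(t-s)$ the operator is diagonalized by the Fourier basis with eigenvalues $2\pi\hat f(n)$, so boundedness from $H^{-r}$ into $H^r$ forces $\hat f(n)=O(|n|^{-2r})$, whereas $f\in C^r$ only gives $\hat f(n)=o(|n|^{-r})$ (take $f$ with $f^{(r)}(t)=\sum_k 2^{-k}e^{i4^kt}$ to see the endpoint claim fail). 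So the lemma is only true if $C^r([0,2\pi]\times[0,2\pi])$ is read as $r$-fold continuous differentiability \emph{in each variable separately}, i.e.\ continuity of $\partial_t^{\,j}\partial_s^{\,i}k$ for all $i,j\le r$ --- which is evidently the convention intended in [18], and is what the paper's Appendix~B supplies for the specific Symm kernel. Under that reading your ``main obstacle'' dissolves (continuous mixed partials are square integrable, and $t\mapsto\Vert\partial_t^{\,j}k(t,\cdot)\Vert_{H^r}$ is continuous, which your first bookkeeping also needs), and your proof is complete; under the literal total-order reading no argument can close the gap because the statement itself is false at $p=-r$. You should make this hypothesis explicit rather than leaving the decisive summability as an expectation.
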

\begin{proof}
See [18, Theorem A.45].
\end{proof}
\subsection{Symm's integral equation of the first kind}
Throughout this paper, we denote the integral operator in (1.3) by $ K $.
\begin{equation*}
(K \Psi)(t) :=  - \frac{1}{\pi} \int^{2\pi}_0 \Psi (s) \ln \vert \boldsymbol{\gamma}(t) - \boldsymbol{\gamma}(s) \vert ds, \quad t \in (0,2\pi).
\end{equation*}
Before the divergence analysis, we first utilize the common technique (see [18, Chapter 3.3]) in Symm's integral equation of the first kind to split above kernel into two parts:
\begin{equation}
- \frac{1}{\pi} \ln \vert \boldsymbol{\gamma}(t) - \boldsymbol{\gamma}(s) \vert = - \frac{1}{2\pi} (\ln (4 \sin^2 \frac{t-s}{2}) -1 )+ k(t,s), \quad (t \neq s)
\end{equation}
where the former is the singular part with singularities at $ t = s$ and corresponds to disc with center $ 0 $ and radius $ a = e^{-\frac{1}{2}} $, that is, $ \boldsymbol{\gamma}_a (s)= a ( \cos s, \sin s ), \ s \in [0,2\pi] $. The latter part $ k $  has a $ C^2 $ continuation onto $ [0,2\pi] \times [0,2\pi] $ (See Appendix B) since $ \boldsymbol{\gamma} $ is three times continuously differentiable.
They define two integral operators respectively as
\begin{equation}
(K_0 \Psi)(t) := - \frac{1}{2\pi} \int^{2\pi}_0 \Psi (s) [\ln (4 \sin^2 \frac{t-s}{2}) - 1 ] ds
\end{equation}
\begin{equation}
C \Psi := K \Psi - K_0 \Psi = \int^{2\pi}_0 k(t,s) \Psi(s) ds.
\end{equation}
We now recall some useful results for above operators.
\begin{lemma}
Let $ \Omega \subseteq \mathrm{R}^2 $ be a simply connected bounded domain with $ \partial \Omega $ be its boundary belongs to class of $ C^2 $. Suppose there exists $ z_0 \in \Omega $ with $ \vert x - z_0 \vert \neq 1 $ for all $ x \in \partial \Omega $. Then $ \psi(x) \in C(\partial \Omega) $ solves (1.2) for $ f = 0 $ must be $ \psi = 0 $.
\end{lemma}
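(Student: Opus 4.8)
The plan is to pass to the single-layer potential generated by $ \psi $ and to exploit interior/exterior uniqueness for the Laplace equation together with the jump relations. Set
\[
u(x) := -\frac{1}{\pi}\int_{\partial\Omega}\psi(y)\ln|x-y|\,ds(y),\qquad x\in\mathrm{R}^2 .
\]
Then $ u $ is harmonic in $ \mathrm{R}^2\setminus\partial\Omega $, continuous across $ \partial\Omega $, and the hypothesis that $ \psi $ solves (1.2) with $ f=0 $ says precisely that $ u=0 $ on $ \partial\Omega $. Restricting to $ \Omega $, the function $ u $ is harmonic with vanishing boundary values, so the maximum principle forces $ u\equiv 0 $ on $ \overline{\Omega} $; in particular the interior normal derivative satisfies $ \partial_\nu u_-=0 $ on $ \partial\Omega $ (with $ \nu $ the outward unit normal of $ \Omega $). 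Since the kernel is twice the fundamental solution of $ -\Delta $ in the plane, the single-layer jump relations give $ \partial_\nu u_- - \partial_\nu u_+ = 2\psi $, so that $ \psi = -\frac12\,\partial_\nu u_+ $; the whole problem therefore reduces to proving $ u\equiv 0 $ in the exterior $ D:=\mathrm{R}^2\setminus\overline{\Omega} $.

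The exterior vanishing is easy once the total charge $ c := \int_{\partial\Omega}\psi(y)\,ds(y) $ is known to be zero. Indeed, since $ \ln|x-y|-\ln|x|\to 0 $ as $ |x|\to\infty $, one gets $ u(x)+\frac{c}{\pi}\ln|x|\to 0 $, i.e. $ u $ carries the logarithmic growth $ -\frac{c}{\pi}\ln|x| $ and \emph{no} constant term at infinity. If $ c=0 $ then $ u $ is a bounded harmonic function on $ D $ vanishing on $ \partial\Omega $ and tending to $ 0 $ at infinity, so the exterior maximum principle (a bounded harmonic function on a planar exterior domain that vanishes on the finite boundary is identically zero) yields $ u\equiv 0 $ on $ D $; then $ \partial_\nu u_+=0 $ and hence $ \psi=0 $, as desired.

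The crux is therefore to show $ c=0 $, and this is where the hypothesis on $ z_0 $ enters; I expect it to be the main obstacle, because the naive Gauss/flux identities relating $ c $ to $ \int_{\partial\Omega}\partial_\nu u_+\,ds $ are all self-referential and yield only the vacuous identity $ c=c $, so the genuine constraint must come from the size of $ \partial\Omega $ relative to the unit circle. First I would note that, $ \Omega $ being simply connected, $ \partial\Omega $ is connected, and $ x\mapsto|x-z_0| $ is continuous and never equal to $ 1 $; hence either $ |x-z_0|<1 $ on all of $ \partial\Omega $ or $ |x-z_0|>1 $ on all of $ \partial\Omega $. By monotonicity of logarithmic capacity and $ \mathrm{cap}(\overline{B(z_0,r)})=r $, the first case gives $ \mathrm{cap}(\partial\Omega)<1 $ and the second $ \mathrm{cap}(\partial\Omega)>1 $, so in either case $ \mathrm{cap}(\partial\Omega)\neq 1 $; equivalently the Robin constant $ \tau $ of $ D $, defined through the unique $ p $ harmonic in $ D $ with $ p=0 $ on $ \partial\Omega $ and $ p(x)=\ln|x|+\tau+o(1) $, is nonzero. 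Now $ u+\frac{c}{\pi}p $ is bounded harmonic on $ D $ (the logarithmic terms cancel) and vanishes on $ \partial\Omega $, hence is identically $ 0 $; letting $ |x|\to\infty $ in $ u+\frac{c}{\pi}p $ and using that $ u $ has no constant term while $ p $ contributes $ \tau $ gives $ \frac{c}{\pi}\tau=0 $, whence $ c=0 $ because $ \tau\neq 0 $.

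Combining the three steps, $ c=0 $ forces $ u\equiv 0 $ on $ D $, the jump relation gives $ \psi=0 $, and the proof is complete. Two technical points I would treat with care are the validity of the single-layer jump relations for a merely continuous density on a $ C^2 $ curve (handled either by a Hölder density argument or by reading the identities weakly) and the precise normalisation relating $ \tau $ to $ \mathrm{cap}(\partial\Omega) $; neither affects the structure above.
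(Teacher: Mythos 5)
Your argument is correct in substance, but note that the paper does not actually prove this lemma: its ``proof'' is the bare citation [19, Theorem 7.38] (Kress). What you have written out is essentially the classical potential-theoretic argument behind that citation, and it is the same mechanism the paper alludes to in the introduction via the transfinite diameter of $\Omega$: the hypothesis $\vert x-z_0\vert\neq 1$ on the connected curve $\partial\Omega$ is precisely a sufficient condition for $\mathrm{cap}(\partial\Omega)\neq 1$, i.e.\ for a nonzero Robin constant, which is what forces the total charge $c=\int_{\partial\Omega}\psi\,ds$ to vanish. Your skeleton --- interior vanishing by the maximum principle, exterior vanishing once $c=0$, and the jump relation $\partial_\nu u_--\partial_\nu u_+=2\psi$ (correctly normalized, since $-\frac{1}{\pi}\ln\vert x-y\vert$ is twice the fundamental solution) --- is sound, and the normal-derivative jump relations do hold for merely continuous densities on a $C^2$ curve as uniform limits along the normal, so that flagged technicality is harmless. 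Two spots deserve one more line each. First, monotonicity of capacity only yields the non-strict inequalities; for strictness, observe by compactness that $\partial\Omega\subset\overline{B(z_0,r)}$ for some $r<1$ in the first case, and that $\overline{B(z_0,1)}\subset\Omega$ (it is connected, meets $\Omega$, and avoids $\partial\Omega$), hence $\overline{B(z_0,r)}\subset\Omega$ for some $r>1$, in the second. Second, the exterior maximum principle you invoke --- a bounded harmonic function on a planar exterior domain vanishing on the finite boundary is identically zero --- is true but should be justified by the removability of the singularity at infinity for bounded harmonic functions in two dimensions, which lets $u+\frac{c}{\pi}p$ extend harmonically to $D\cup\{\infty\}$ so that the ordinary maximum principle applies; this same extension is what legitimizes reading off the limit $\frac{c}{\pi}\tau=0$ and concluding $c=0$. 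With those two remarks added, your proof is a complete, self-contained replacement for the paper's citation.
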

\begin{proof}
See [19, Theorem 7.38]
\end{proof}
\begin{lemma}
It holds that
\begin{equation*}
\frac{1}{2\pi} \int^{2\pi}_0 e^{ins} \ln (4 \sin^2 \frac{s}{2}) ds =
\left\{
\begin{array}{rcl}
-\frac{1}{\vert n \vert}, & &  {n \in \mathrm{Z}, n \neq 0},  \\
0,  & &  {n = 0}.
\end{array}
\right.
\end{equation*}
This gives that the functions
\begin{equation*}
\hat \psi_n (t) := e^{int}, \quad t \in [0,2\pi], \ n \in \mathrm{Z},
\end{equation*}
are eigenfunctions of $ K_0 $:
\begin{equation*}
K_0 \hat \psi_n = \frac{1}{\vert n \vert} \hat \psi_n \quad \textrm{for} \ n \neq 0 \ \textrm{and}
\end{equation*}
\begin{equation*}
K_0 \hat \psi_0 = \hat \psi_0.
\end{equation*}
\end{lemma}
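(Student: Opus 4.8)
The plan is to reduce the integral to the computation of the Fourier coefficients of $\ln(4\sin^2\frac{s}{2})$, and then to deduce the eigenvalue relations by a translation substitution. First I would record the elementary identity $4\sin^2\frac{s}{2} = 2(1-\cos s) = \vert 1 - e^{is}\vert^2$, so that
\[ \ln\left(4\sin^2\frac{s}{2}\right) = 2\ln\left\vert 1 - e^{is}\right\vert = 2\,\mathrm{Re}\,\ln\left(1 - e^{is}\right), \quad s \in (0,2\pi). \]
Inserting the power series $\ln(1-z) = -\sum_{k=1}^{\infty} z^k/k$, which is valid on $\vert z\vert \leq 1$, $z \neq 1$ by Abel's theorem, with $z = e^{is}$ and taking real parts yields the pointwise expansion
\[ \ln\left(4\sin^2\frac{s}{2}\right) = -2\sum_{k=1}^{\infty}\frac{\cos ks}{k} = -\sum_{k \neq 0}\frac{1}{\vert k\vert}e^{iks}, \quad s \in (0,2\pi). \]

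Next I would argue that this is genuinely the Fourier series of the function. Since $\ln^2$ is integrable near the endpoints, $\ln(4\sin^2\frac{s}{2}) \in L^2(0,2\pi)$; meanwhile the coefficient sequence $\{-1/\vert k\vert\}$ is square-summable. Hence the trigonometric series above converges in $L^2(0,2\pi)$ to a limit whose Fourier coefficients are exactly $c_k = -1/\vert k\vert$ for $k \neq 0$ and $c_0 = 0$, and by uniqueness of Fourier coefficients together with the pointwise identity these are the Fourier coefficients of $\ln(4\sin^2\frac{s}{2})$ itself. The desired integral is then, by definition,
\[ \frac{1}{2\pi}\int_0^{2\pi} e^{ins}\ln\left(4\sin^2\frac{s}{2}\right) ds = c_{-n}, \]
which equals $-1/\vert n\vert$ for $n \neq 0$ and $0$ for $n = 0$. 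I would verify the case $n = 0$ independently from $\int_0^{\pi}\ln\sin u\, du = -\pi\ln 2$, which gives $\int_0^{2\pi}\ln(4\sin^2\frac{s}{2})\,ds = 2\pi\ln 4 - 4\pi\ln 2 = 0$, consistent with $c_0 = 0$.

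Finally, for the eigenfunction claim I would substitute $u = s - t$ in
\[ (K_0\hat\psi_n)(t) = -\frac{1}{2\pi}\int_0^{2\pi} e^{ins}\left[\ln\left(4\sin^2\frac{t-s}{2}\right) - 1\right] ds, \]
using the $2\pi$-periodicity of the integrand and the evenness of $\sin^2\frac{u}{2}$ to factor out $e^{int}$ and reduce the remaining integral to the one just computed plus the elementary value $\frac{1}{2\pi}\int_0^{2\pi} e^{inu}\,du = \delta_{n0}$. For $n \neq 0$ the constant term drops out and one obtains $K_0\hat\psi_n = \frac{1}{\vert n\vert}\hat\psi_n$, while for $n = 0$ the $-1$ term contributes and gives $K_0\hat\psi_0 = \hat\psi_0$.

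The main obstacle is the justification at the middle step: the series does not converge uniformly (indeed the function is unbounded at $s=0$), so the formal term-by-term integration against $e^{ins}$ must be licensed carefully. I expect to handle this cleanly through $L^2$ convergence combined with continuity of the $L^2$ inner product against $e^{ins}$, which sidesteps any need for uniform or pointwise control near the logarithmic singularity; alternatively one could avoid series altogether by integrating by parts on $[\varepsilon, 2\pi-\varepsilon]$ and letting $\varepsilon \to 0$, the boundary contributions vanishing because the logarithmic singularity is dominated.
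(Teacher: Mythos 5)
Your proof is correct, and it is self-contained where the paper is not: the paper's ``proof'' of this lemma is only a pointer to [18, Theorem 3.17], so you are supplying an actual argument rather than re-deriving the one in the text. The chain $4\sin^2\frac{s}{2}=\vert 1-e^{is}\vert^2$, the boundary expansion of $\ln(1-z)$, and the identification of the resulting trigonometric series as the Fourier series of $\ln(4\sin^2\frac{s}{2})$ is the standard route (and essentially what the cited reference does), and your translation substitution $u=s-t$ correctly converts the Fourier-coefficient computation into the eigenvalue statement, including the extra $-1$ in the kernel that only survives for $n=0$. The one step that genuinely needs care is the one you flag yourself: passing from pointwise convergence on $(0,2\pi)$ plus $L^2$ convergence of the series to the claim that the $L^2$ limit \emph{is} the function $\ln(4\sin^2\frac{s}{2})$. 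The clean way to close this is the subsequence argument ($L^2$ convergence yields an a.e.\ convergent subsequence, which must agree a.e.\ with the pointwise limit), or equivalently Abel--Poisson summation $\ln\vert 1-re^{is}\vert^2\to\ln\vert 1-e^{is}\vert^2$ with dominated convergence; either one line suffices, and your proposed $\varepsilon$-truncation with integration by parts would also work. With that sentence added, the proof is complete and arguably preferable to a bare citation, since the lemma is the computational heart of the spectral analysis of $K_0$ used throughout Sections 6 and 7.
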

\begin{proof}
See [18, Theorem 3.17]
\end{proof}
\begin{lemma}
Let $ \Omega \subseteq \mathrm{R}^2 $ be a simply connected bounded domain with $ \partial \Omega $ be its boundary belongs to class of $ C^3 $. Suppose there exists $ z_0 \in \Omega $ with $ \vert x - z_0 \vert \neq 1 $ for all $ x \in \partial \Omega $. Then
 \newline (a) $ K $ is a compact and self-joint in $ L^2(0,2\pi) $.
 \newline (b) The operator $ K_0 $ is bounded injective from $ H^{s-1} (0,2\pi) $ onto $ H^s(0,2\pi) $ with bounded inverses for every $ s \in \mathrm{R} $, the same assertion also holds for $ K $ when $ -1 \leq s < 2 $.
 \newline (c) The operator $ K_0 $ is coercive from $ H^{-\frac{1}{2}} (0,2\pi) $ into $ H^{\frac{1}{2}} (0,2\pi) $.
 \newline (d) The operator $ C := K- K_0 $ is compact from $ H^{s-1} (0,2\pi) $ into $ H^s(0,2\pi) $ for every $ -1 \leq s < 2 $.
\end{lemma}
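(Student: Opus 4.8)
The plan is to prove the four assertions in an order that lets the later parts rest on the earlier ones, reserving the genuinely analytic work for the injectivity of $K$. I would begin with (d), which is the most self-contained. Since $\boldsymbol{\gamma} \in C^3$, the remainder kernel $k$ in (2.4) lies in $C^2([0,2\pi]\times[0,2\pi])$ and is $2\pi$-periodic in each variable (this is what Appendix B supplies). Lemma 2.3 with $r=2$ then gives that $C$ maps $H^p(0,2\pi)$ boundedly into $H^2(0,2\pi)$ for every $-2 \le p \le 2$. For $-1 \le s < 2$ the exponent $p = s-1$ lies in $[-2,1)\subset[-2,2]$, so $C\colon H^{s-1}\to H^2$ is bounded; composing with the embedding $H^2 \hookrightarrow H^s$, which is compact precisely because $s<2$, yields compactness of $C\colon H^{s-1}\to H^s$.

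Next I would dispose of the $K_0$ statements in (b) and (c) by a direct Fourier-multiplier computation. Lemma 2.5 shows $K_0$ is diagonal in the basis $\{e^{int}\}$ with eigenvalues $1/|n|$ for $n\neq0$ and $1$ for $n=0$. For $\Psi = \sum_n a_n e^{int}$ one compares $\|K_0\Psi\|_{H^s}^2 = |a_0|^2 + \sum_{n\neq0}(1+n^2)^s|a_n|^2/n^2$ with $\|\Psi\|_{H^{s-1}}^2 = \sum_n(1+n^2)^{s-1}|a_n|^2$; since the ratio $(1+n^2)/n^2$ stays in $[1,2]$ for $|n|\ge1$, the two norms are equivalent, so $K_0\colon H^{s-1}\to H^s$ is a bounded isomorphism with bounded inverse for every $s$, injectivity being immediate from the nonvanishing of the eigenvalues. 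For (c), the $L^2$ duality pairing gives $\langle K_0\Psi,\Psi\rangle = \sum_n |a_n|^2/\max(|n|,1) \ge \sum_n (1+n^2)^{-1/2}|a_n|^2 = \|\Psi\|_{H^{-1/2}}^2$, using $\max(|n|,1)\le\sqrt{1+n^2}$; this is the asserted coercivity.

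With these in hand, the $K$ statement in (b) follows from Riesz--Fredholm theory. Writing $K = K_0 + C = K_0(I + K_0^{-1}C)$ on $H^{s-1}$, the operator $K_0^{-1}C\colon H^{s-1}\to H^{s-1}$ is compact (the compact map $C\colon H^{s-1}\to H^s$ followed by the bounded $K_0^{-1}\colon H^s\to H^{s-1}$), so $I+K_0^{-1}C$ is Fredholm of index zero and is invertible as soon as it is injective; hence $K$ is a bounded isomorphism with bounded inverse once $K$ is injective on $H^{s-1}$. This injectivity is the crux. Suppose $\Psi\in H^{s-1}$ with $K\Psi=0$, i.e. $K_0\Psi = -C\Psi$. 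Since $s-1\in[-2,1)$, Lemma 2.3 gives $C\Psi\in H^2$, and $K_0^{-1}\colon H^2\to H^1$ then yields $\Psi = -K_0^{-1}C\Psi \in H^1(0,2\pi)\hookrightarrow C[0,2\pi]$. Consequently $\psi$, recovered from $\Psi(s)=\psi(\boldsymbol{\gamma}(s))|\dot{\boldsymbol{\gamma}}(s)|$ using $|\dot{\boldsymbol{\gamma}}|>0$, is continuous on $\partial\Omega$ and solves (1.2) with $f=0$; Lemma 2.4 forces $\psi=0$, whence $\Psi=0$.

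Finally (a): the kernel $-\tfrac{1}{\pi}\ln|\boldsymbol{\gamma}(t)-\boldsymbol{\gamma}(s)|$ is real and symmetric in $(t,s)$, so $K$ coincides with its adjoint on $L^2(0,2\pi)$, and compactness follows by taking $s=1$ in the part (b) just proved, so that $K\colon L^2=H^0\to H^1$ is bounded and the compact embedding $H^1\hookrightarrow L^2$ renders $K$ compact on $L^2$. I expect the main obstacle to be exactly the injectivity step inside (b): one must bootstrap the low a priori regularity of $\Psi$ up to continuity before the potential-theoretic uniqueness of Lemma 2.4 may be invoked, and this is where the $C^3$ hypothesis on $\partial\Omega$ (hence $C^2$ for $k$, hence the single gain into $H^1$) is used essentially. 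Verifying the periodicity and $C^2$-regularity of $k$ claimed from Appendix B also deserves care, since Lemma 2.3 requires both.
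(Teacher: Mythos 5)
Your proposal is correct and follows essentially the same route as the paper: part (d) via Lemma 2.3 plus the compact embedding $H^2 \subset\subset H^s$ for $s<2$, and the $K$-part of (b) by reducing to injectivity through the Fredholm perturbation $K = K_0(I+K_0^{-1}C)$ and then bootstrapping $K\Psi=0$ to $\Psi\in H^1\hookrightarrow C[0,2\pi]$ so that Lemma 2.4 applies. The only difference is that for (a), the $K_0$-part of (b), and (c) the paper simply cites [18, Theorems A.33, 3.17, 3.18], whereas you reproduce the standard Fourier-multiplier computations; these are correct and match the cited arguments.
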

\begin{proof}
See [18, Theorem A.33 and Theorem 3.18] for (a), the former part of (b), (c). Following the main idea in [1, theorem 3.18], we prove the latter part of (b) and (d). Since the $ k $ has a $ C^2 $ continuation, by Lemma 2.3, $ C $ defines a bounded operator from $ H^p $ to $ H^2 $ with $ -2 \leq p \leq 2 $. Composing with a compact embedding $ H^2 \subset \subset H^s, \ (s < 2) $, (d) follows.
\newline \indent Similar to [18, theorem 3.18],  for the latter part of (b) it is sufficient to prove the injectivity of $ K $ from $ H^{s-1} $ to $ H^s $ with $ -1 \leq s < 2 $. Let $ \Psi \in H^{s-1} $ with $ K \Psi = 0 $. From $ K_0 \Psi = - C \Psi $ and the mapping properties of $ C $, we know $ K_0 \Psi \in H^2 (0,2\pi) $ and thus, $ \Psi \in H^1 (0,2\pi) $. This implies that $ \Psi $ is continuous and the transformed function $ \psi (\boldsymbol{\gamma}(t)) = \frac{\Psi(t)}{\vert \boldsymbol{\gamma}'(t) \vert} $ satifies (1.2) for $ g = 0 $. Lemma 2.4 gives $ \psi = 0 $.
\end{proof}
\subsection{Gelfand triple, coercivity and G\"{a}rding's inequality}
Let $ V $ be reflexive Banach space with dual space $ V^* $. We denote the norms in $ V $ and $ V^* $ by $ \Vert \cdot \Vert_V $ and $ \Vert \cdot \Vert_{V^*} $, respectively. A linear bounded operator $ A : V^* \to V $ is called coercive if there exists a $ \gamma > 0 $ such that \begin{equation*}
 \Re \langle x, Ax \rangle \geq \gamma \Vert x \Vert^2_{V^*} \quad \textrm{for all} \ x \in V^* ,
\end{equation*}
with dual pairing $ \langle \cdot, \cdot \rangle $ in $ (V^*,V) $. The operator $ A $ satisfies G\"{a}rding's inequality if there exists a linear compact operator $ C : V^* \to V $ such that $ K + C $ is coercive, that is,
\begin{equation*}
 \Re \langle x, Ax \rangle \geq \gamma \Vert x \Vert^2_{V^*} - \Re \langle x, Cx \rangle \quad \textrm{for all} \ x \in V^* ,
\end{equation*}

\indent A Gelfand triple $ (V, X, V^*) $ consists of a reflexive Banach space $ V $, a Hilbert space $ X $, and the dual space $ V^* $ of $ V $ such that
\newline \indent (a) $ V $ is dense subspace of $ X $, and
\newline\indent (b) the embedding $ J : V \to X $ is bounded.
\newline \indent We write $ V \subseteq X \subseteq V^* $ because we can identify $ X $ with a dense subspace of $ V^* $. This identification is given by the dual operator $ J^* : X \to V^* $ of $ J $, where we identify the dual of the Hilbert space $ X $ by itself and $ (x,y) = \langle J^* x, y \rangle $ for all $ x \in X $ and $ y \in V $.

\section{Unified projection setting and its divergence result}
Let $ X,Y $ be Hilbert spaces over the complex scalar field, $ \{ X_n \} $ and $ \{ Y_n \} $ be sequences of closed subspaces of $ X $ and $ Y $ respectively, $ P_n := P_{X_n} $ and $ Q_n:= Q_{Y_n} $ be orthogonal projection operators which project $ X $ and $ Y $ onto $ X_n $ and $ Y_n $ respectively. Let the original operator equation of the first kind be
\begin{equation}
Ax = b, A \in \mathcal{B}(X,Y), \ x \in X, \ b \in Y
\end{equation}
Its unified projection approximation setting is
\begin{equation}
A_n x_n = b_n, \ A_n \in \mathcal{B}(X_n,Y_n), \ x_n \in X_n, \ b_n \in Y_n,
\end{equation}
where
\begin{equation*}
A_n := Q_n A P_n : X_n \to Y_n, \  \mathcal{R}(A_n) \ \textrm{closed}.
\end{equation*}
Specifically, three different projectional setting is arranged as
\newline (1) Least squares method: Finite-dimensional $ X^{LS}_n \subseteq X $ such that $ \bigcup_{n \in \mathrm{N}} X^{LS}_n $ is dense in $ X $ with $ Y^{LS}_n = A(X^{LS}_n) $ and $ b^{LS}_n := Q^{LS}_n b $, where $ Q^{LS}_n := Q_{Y^{LS}_n}$;
\newline (2) Dual least squares method: Finite-dimensional $ Y^{DLS}_n \subseteq Y $ such that $ \bigcup_{n \in \mathrm{N}} Y^{DLS}_n $ is dense in $ Y $ with $ X^{DLS}_n = A^*(Y^{DLS}_n) $ and $ b^{DLS}_n := Q^{DLS}_n b $, where $ Q^{DLS}_n := Q_{Y^{DLS}_n}$ ;
\newline (3) Bubnov-Galerkin method: Backgound Hilbert spaces $ X = Y $ with finite-dimensional $ Y^{BG}_n = X^{BG}_n \subseteq X $ such that $ \bigcup_{n \in \mathrm{N}} X^{BG}_n $ is dense in $ X $ and $ b^{BG}_n := Q^{BG}_n b $, where $ Q^{BG}_n := Q_{Y^{BG}_n}$ .

The Unified divergence result for general projection setting is illustrated as follows.
\begin{lemma}
For projection setting (3.1), (3.2), if $ (\{ X_n \}_{n \in \mathrm{N}}, \{ Y_n \}_{n \in \mathrm{N}} ) $ satisfies the completeness condition, that is,
\begin{equation*}
P_n \stackrel{s}{\longrightarrow} I_X, \quad
 Q_n \stackrel{s}{\longrightarrow} I_Y,
\end{equation*}
and
\begin{equation}
\sup_n \Vert A^\dagger_n Q_n A \Vert < \infty
\end{equation}
where $ \dagger $ denotes the Moore-Penrose inverse of linear operator (See [2, Definition 2.2]),
then, for $ b \notin \mathcal{D}(A^\dagger) = \mathcal{R}(A) \oplus \mathcal{R}(A)^\perp $,
\begin{equation*}
\lim_{n \to \infty} \Vert A^\dagger_n Q_n Q_{\overline{\mathcal{R}(A)}} b \Vert = \infty
\end{equation*}
\end{lemma}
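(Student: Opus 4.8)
The plan is to argue by contradiction, isolating the mechanism that forces blow-up. Set $b^{+} := Q_{\overline{\mathcal{R}(A)}}\,b$ and $x_n := A^{\dagger}_n Q_n b^{+}\in X_n$, the minimal-norm least squares solution of $A_n x = Q_n b^{+}$. The hypothesis $b\notin\mathcal{D}(A^{\dagger})=\mathcal{R}(A)\oplus\mathcal{R}(A)^{\perp}$ is equivalent, by uniqueness of the orthogonal decomposition relative to $\overline{\mathcal{R}(A)}\oplus\mathcal{R}(A)^{\perp}$, to the single statement $b^{+}\notin\mathcal{R}(A)$ (while $b^{+}\in\overline{\mathcal{R}(A)}$ always). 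It therefore suffices to prove that $\{x_n\}$ has \emph{no} bounded subsequence; the asserted limit $\Vert x_n\Vert\to\infty$ follows at once.

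First I would establish, \emph{independently} of any boundedness of $\{x_n\}$, the deterministic convergence $A_n x_n \to b^{+}$ strongly in $Y$. Since $\mathcal{R}(A_n)$ is closed, $A_n A^{\dagger}_n$ is the orthogonal projection onto $\mathcal{R}(A_n)$, whence $A_n x_n = P_{\mathcal{R}(A_n)} Q_n b^{+}$. The completeness hypotheses give, for every $z\in X$, $\ Q_n A P_n z \to A z$: writing $Q_n A P_n z - Az = Q_n A(P_n z - z) + (Q_n - I)Az$, the first term is bounded by $\Vert A\Vert\,\Vert P_n z - z\Vert\to 0$ and the second tends to $0$ by $Q_n\stackrel{s}{\longrightarrow} I_Y$. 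As $\mathcal{R}(A_n)=\{Q_n A u: u\in X_n\}$, this shows every element of $\mathcal{R}(A)$ is approximated from $\mathcal{R}(A_n)$, hence $\mathrm{dist}(b^{+},\mathcal{R}(A_n))\to 0$ for $b^{+}\in\overline{\mathcal{R}(A)}$. Combining $\Vert b^{+}-P_{\mathcal{R}(A_n)}b^{+}\Vert=\mathrm{dist}(b^{+},\mathcal{R}(A_n))\to 0$ with $\Vert P_{\mathcal{R}(A_n)}(b^{+}-Q_n b^{+})\Vert\le\Vert(I-Q_n)b^{+}\Vert\to 0$ yields $A_n x_n\to b^{+}$.

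Now suppose, for contradiction, that along a subsequence (relabelled $x_n$) one has $\Vert x_n\Vert\le c$. By weak compactness of bounded sets in the Hilbert space $X$, pass to a further subsequence with $x_n\rightharpoonup x^{*}$. Because $x_n\in X_n$, $A_n x_n = Q_n A P_n x_n = Q_n A x_n$, so for every $\varphi\in Y$, using that $Q_n$ is self-adjoint, $\langle A x_n,\varphi\rangle = \langle Q_n A x_n,\varphi\rangle + \langle A x_n,(I-Q_n)\varphi\rangle$. The first term converges to $\langle b^{+},\varphi\rangle$ by the preceding step, while the second is bounded by $\Vert A\Vert\, c\,\Vert(I-Q_n)\varphi\Vert\to 0$. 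Hence $A x_n\rightharpoonup b^{+}$; on the other hand $A x_n\rightharpoonup A x^{*}$ by weak–weak continuity of the bounded operator $A$. Uniqueness of weak limits forces $A x^{*}=b^{+}$, i.e. $b^{+}\in\mathcal{R}(A)$, contradicting $b^{+}\notin\mathcal{R}(A)$. Thus no bounded subsequence exists and $\Vert A^{\dagger}_n Q_n Q_{\overline{\mathcal{R}(A)}} b\Vert\to\infty$.

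As to the hypotheses and the main obstacle: I anticipate that the uniform bound (3.3) functions as the stability counterpart which, together with completeness, secures convergence $A^{\dagger}_n Q_n v\to A^{\dagger}v$ on the dense domain $\mathcal{D}(A^{\dagger})$, so that the present lemma is exactly its sharp complement off that domain; I would also invoke (3.3) to bound the method uniformly on $\mathcal{R}(A)$ and exclude spurious cancellations, though the divergence itself is driven by $b^{+}\notin\mathcal{R}(A)$ and the strong convergence of the projections. The step I expect to require the most care is the strong convergence $A_n x_n\to b^{+}$, and within it the claim $\mathrm{dist}(b^{+},\mathcal{R}(A_n))\to 0$, which must be extracted solely from $P_n\stackrel{s}{\longrightarrow}I_X$ and $Q_n\stackrel{s}{\longrightarrow}I_Y$ (the density of $\bigcup_n X_n$ and $\bigcup_n Y_n$). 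The second delicate point is the interchange of weak and strong limits in the pairing, handled above by testing $(I-Q_n)\varphi\to 0$ strongly against the \emph{uniformly bounded} sequence $A x_n$.
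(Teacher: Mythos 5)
Your proof is correct, but it cannot be matched line-by-line against the paper's, because the paper does not actually prove Lemma 3.1: its ``proof'' consists of the single citation ``See [9, Theorem 2.2 (c)]''. What you supply is therefore a genuine self-contained argument, and it is sound. The identity $A_nA_n^\dagger=P_{\mathcal{R}(A_n)}$ (legitimate because $\mathcal{R}(A_n)$ is assumed closed in (3.2)) reduces everything to $A_nx_n=P_{\mathcal{R}(A_n)}Q_nb^{+}\to b^{+}$, which you correctly extract from the completeness condition via $Q_nAP_nz\to Az$ together with an $\varepsilon/2$ density argument for $b^{+}\in\overline{\mathcal{R}(A)}$; the weak-compactness step then converts any bounded subsequence of $\{x_n\}$ into a solution of $Ax=b^{+}$, contradicting $b^{+}\notin\mathcal{R}(A)$, and the equivalence $b\notin\mathcal{D}(A^\dagger)\Leftrightarrow b^{+}\notin\mathcal{R}(A)$ is handled correctly via uniqueness of the orthogonal decomposition. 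One point you should make explicit rather than leave as an aside: your argument never invokes hypothesis (3.3), $\sup_n\Vert A_n^\dagger Q_nA\Vert<\infty$. That is not a defect --- it means you have proved a formally stronger statement, in which the blow-up is driven solely by the completeness condition, the closedness of $\mathcal{R}(A_n)$, and $b^{+}\notin\mathcal{R}(A)$; your closing remark correctly identifies (3.3) as the stability ingredient for the companion convergence assertion on $\mathcal{D}(A^\dagger)$ (the parts of the cited theorem this paper uses elsewhere through Lemmas 4.1, 5.1 and 6.1), not for the divergence off it. If you present this as a replacement proof, either state plainly that (3.3) is superfluous for this conclusion, or check the formulation of [9, Theorem 2.2(c)] to confirm that nothing stronger (e.g.\ a rate or a uniformity in $b$) is being claimed there that would actually consume (3.3).
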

\begin{proof}
See [9, Theorem 2.2 (c)]
\end{proof}

\section{Analysis for Least square method}
Set $ X = Y = L^2(0,2\pi) $ and
\begin{equation}
X^{LS}_n = span \{e^{ikt} \}^{n}_{k=-n}, \quad Y^{LS}_n = K(X^{LS}_n)
\end{equation}
Aiming to complete the convergence analysis on least squares method, we first introduce some technical lemmas:
\begin{lemma}
Let $ A: X \to Y $ be a linear, bounded, and injective operator between Hilbert spaces and $ X^{LS}_n \subseteq X $ be finite-dimensional subspaces such that $ \bigcup_{n \in \mathrm{N}} X^{LS}_n  $ is dense in $ X $. Let $ x \in X $ be the solution of $ Ax = y $ and $ x^{\delta}_n $ be the least square solution from (3.2) with $ b $ being replaced by $ b^\delta $ and $ \Vert b^\delta - b \Vert \leq \delta $. Define
\begin{equation*}
\sigma^{LS}_n = \sigma^{LS}_n (A) := \max \{ \Vert z_n \Vert : z_n \in X^{LS}_n, \Vert Az_n \Vert = 1 \},
\end{equation*}
let there exists a constant $ \tau^{LS} > 0 $, independent of $ n $, such that
\begin{equation}
\min_{z_n \in X^{LS}_n} \{ \Vert x - z_n \Vert + \sigma_n \Vert A(x-z_n) \Vert \} \leq \tau^{LS} \Vert x \Vert \  \textrm{for all} \ x \in X.
\end{equation}
Then the least square method is uniquely solvable, that is, $ A^{LS}_n:= Q_{Y^{LS}_n} A P_{X^{LS}_n} : X^{LS}_n \to Y^{LS}_n $ is invertible, where $ Y^{LS}_n = A(X^{LS}_n) $, and convergent, that is,
\begin{equation}
{A^{LS}_n}^{-1} Q^{LS}_n b \stackrel{s}{\to} A^{-1} b, \quad (b \in \mathcal{R}(A))
\end{equation}
with $ \Vert R^{LS}_n \Vert \leq \sigma^{LS}_n $, where $ R^{LS}_n:= {A^{LS}_n}^{-1} Q^{LS}_n : Y \to X^{LS}_n \subseteq X $. In this case, we have the error estimate
\begin{equation*}
\Vert A^{-1} b - {A^{LS}_n}^{-1} Q^{LS}_n b^{\delta} \Vert \leq \sigma^{LS}_n \delta + c^{LS} \min\{\Vert x - z_n \Vert: z_n \in X^{LS}_n\}
\end{equation*}
where $ c^{LS}:= \tau^{LS} +1, $
   Notice that $ (\{X^{LS}_n\}, \{Y^{LS}_n\})_{n \in \mathrm{N}}$ are all not specifically chosen.
\end{lemma}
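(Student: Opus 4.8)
The plan is to read Lemma 4.1 as an abstract quasi-optimality result and to extract the one structural fact that drives the whole argument: since $Y^{LS}_n = A(X^{LS}_n)$ and $A$ is injective, the discrete operator $A^{LS}_n = Q^{LS}_n A P_{X^{LS}_n}$ agrees with the restriction $A|_{X^{LS}_n}$ on $X^{LS}_n$. Indeed, for $z_n \in X^{LS}_n$ we have $P_{X^{LS}_n} z_n = z_n$ and $A z_n \in A(X^{LS}_n) = Y^{LS}_n$, so $Q^{LS}_n A z_n = A z_n$. Injectivity of $A$ then makes $A|_{X^{LS}_n}$ a bijection of $X^{LS}_n$ onto the equal-dimensional space $Y^{LS}_n$, so $A^{LS}_n$ is invertible and the method is uniquely solvable. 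The same injectivity makes $z_n \mapsto \|A z_n\|$ a norm on the finite-dimensional $X^{LS}_n$, so $\sigma^{LS}_n$ is finite and well defined.

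Next I would prove the stability bound $\|R^{LS}_n\| \le \sigma^{LS}_n$. For $y \in Y$ put $z_n := R^{LS}_n y \in X^{LS}_n$; then $A^{LS}_n z_n = Q^{LS}_n y$, which by the previous step reads $A z_n = Q^{LS}_n y$, so $\|A z_n\| = \|Q^{LS}_n y\| \le \|y\|$. The definition of $\sigma^{LS}_n$ gives $\|z_n\| \le \sigma^{LS}_n\|A z_n\| \le \sigma^{LS}_n\|y\|$, which is the claim. I would also record the projection identity $R^{LS}_n A z_n = (A^{LS}_n)^{-1} Q^{LS}_n A z_n = (A^{LS}_n)^{-1} A^{LS}_n z_n = z_n$ for every $z_n \in X^{LS}_n$; equivalently, the error operator $I - R^{LS}_n A$ annihilates $X^{LS}_n$.

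The heart of the proof is a uniform bound on $I - R^{LS}_n A$. For arbitrary $w \in X$ and any $z_n \in X^{LS}_n$, the projection identity gives $(I - R^{LS}_n A)w = (I - R^{LS}_n A)(w - z_n)$, whence
\begin{equation*}
\|(I - R^{LS}_n A)w\| \le \|w - z_n\| + \|R^{LS}_n A(w - z_n)\| \le \|w - z_n\| + \sigma^{LS}_n\|A(w - z_n)\|,
\end{equation*}
using the stability bound in the last step. Taking the minimum over $z_n$ and invoking hypothesis (4.2) yields $\|(I - R^{LS}_n A)w\| \le \tau^{LS}\|w\|$, so the family $\{I - R^{LS}_n A\}_n$ is uniformly bounded. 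I expect this to be the crux of the argument: condition (4.2) is exactly what fuses the approximation error $\|w - z_n\|$ with its $\sigma^{LS}_n$-amplified image $\sigma^{LS}_n\|A(w - z_n)\|$ into a single $n$-independent bound, and it is here that the otherwise unbounded growth of $\sigma^{LS}_n$ is tamed.

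Finally I would assemble the error estimate and convergence. With $x = A^{-1} b$, the projection identity gives $x - R^{LS}_n b = (I - R^{LS}_n A)x = (I - R^{LS}_n A)(x - z_n)$ for every $z_n \in X^{LS}_n$, so by the uniform bound
\begin{equation*}
\|x - R^{LS}_n b\| \le \tau^{LS}\min\{\|x - z_n\| : z_n \in X^{LS}_n\}.
\end{equation*}
Splitting off the data error through $\|R^{LS}_n(b - b^\delta)\| \le \sigma^{LS}_n\delta$ and a triangle inequality then produces the stated estimate; the constant is $c^{LS} = \tau^{LS}$ in this clean form, while the paper's $\tau^{LS}+1$ arises from slightly cruder triangle-inequality bookkeeping (for instance bounding $\|I - R^{LS}_n A\| \le 1 + \|R^{LS}_n A\|$). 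Convergence (4.3) is then immediate: for exact data and $b \in \mathcal{R}(A)$, density of $\bigcup_n X^{LS}_n$ in $X$ forces $\min\{\|x - z_n\|\} \to 0$, hence $(A^{LS}_n)^{-1} Q^{LS}_n b = R^{LS}_n b \to A^{-1} b$.
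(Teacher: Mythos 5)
Your proof is correct and is essentially the standard argument behind the result the paper merely cites ([18, Theorem 3.10]): invertibility via $A^{LS}_n z_n = A z_n$ on $X^{LS}_n$, the stability bound $\Vert R^{LS}_n\Vert \le \sigma^{LS}_n$, the projection identity $R^{LS}_n A z_n = z_n$, the uniform bound on $I - R^{LS}_n A$ from hypothesis (4.2), and quasi-optimality plus density for convergence. Your observation that applying the uniform bound to $w = x - z_n$ yields the sharper constant $c^{LS} = \tau^{LS}$ (rather than $\tau^{LS}+1$) is also correct; the larger constant in the statement only reflects cruder bookkeeping.
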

\begin{proof}
This is an operator equation version of [18, Theorem 3.10].
\end{proof}
\begin{remark}
Notice that if the conditions of Lemma 4.1 is satisfied, then the (4.3) holds. Inserting $ b = Ax, \ x \in X $ into the (4.3), we have \begin{equation*}
{A^{LS}_n}^{-1} Q^{LS}_n A x \stackrel{s}{\to} x = A^{-1} A x, \quad ( x \in X)
\end{equation*}
Then Banach-Steinhaus theorem gives that
\begin{equation*}
\sup_n \Vert {A^{LS}_n}^{-1} Q^{LS}_n A \Vert < \infty
\end{equation*}
which by Lemma 3.1 gives a divergence result if $ \overline{\mathcal{R}(A)} = Y $. In this way, we may complete both convergence and divergence analysis after verifying the conditions of Lemma 4.1 for $ K $ in $ L^2(0,2\pi) $. The same thought can also be applied to the other two Petrov-Galerkin methods.
\end{remark}
\begin{lemma}
(Stability estimate for Symm's integral equation of the first kind):
There exists a $ c>0 $, independent of $ n $, such that
\begin{equation*}
\Vert \Psi_n \Vert_{L^2} \leq c n \Vert K \Psi_n \Vert_{L^2} \ \textrm{for all}  \ \Psi_n \in X_n.
\end{equation*}
This yields that $ \sigma^{LS}_n(K) \leq c n $.
\end{lemma}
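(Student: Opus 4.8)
The plan is to route the estimate through the weaker norm $H^{-1}(0,2\pi)$, where $K$ is boundedly invertible, and to recover the $L^2$ norm of $\Psi_n$ at the cost of a single power of $n$ supplied by the inverse inequality. The single factor $n$ in the claim is not an artifact: the model operator $K_0$ has eigenvalues $1/|k|$ on $X_n$ (Lemma 2.5), so its smallest singular value on $X_n$ is of order $1/n$, forcing $\|\Psi_n\|_{L^2} \leq n\|K_0\Psi_n\|_{L^2}$ and no better (take $\Psi_n = e^{int}$). Thus the target power of $n$ is exactly the loss one expects from inverting a once-smoothing operator on the space of degree-$n$ trigonometric polynomials, and the argument below produces precisely this.

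First I would apply Lemma 2.6(b) with $s = 0$. Since $0 \in [-1,2)$, the operator $K$ is bounded and bijective from $H^{-1}(0,2\pi)$ onto $H^0(0,2\pi) = L^2(0,2\pi)$, with bounded inverse. Setting $c_1 := \|K^{-1}\|_{L^2 \to H^{-1}}$, this gives, for every $\Psi_n \in X_n$,
\begin{equation*}
\|\Psi_n\|_{H^{-1}} = \|K^{-1} K \Psi_n\|_{H^{-1}} \leq c_1 \|K\Psi_n\|_{L^2}.
\end{equation*}
Next I would invoke the inverse inequality (Lemma 2.2) with $r = 0$ and $s = -1$; since $r \geq s$ and $r - s = 1$, there is a constant $c_2 > 0$, independent of $n$, with
\begin{equation*}
\|\Psi_n\|_{L^2} = \|\Psi_n\|_{H^0} \leq c_2\, n\, \|\Psi_n\|_{H^{-1}}, \quad \Psi_n \in X_n.
\end{equation*}
Chaining the two displays yields $\|\Psi_n\|_{L^2} \leq c\, n\, \|K\Psi_n\|_{L^2}$ with $c := c_1 c_2$ independent of $n$, which is the asserted stability estimate.

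The bound on $\sigma^{LS}_n(K)$ then follows immediately from its definition in Lemma 4.1: for any $z_n \in X_n$ with $\|K z_n\|_{L^2} = 1$ the estimate just proved gives $\|z_n\|_{L^2} \leq c\, n$, and taking the maximum over all such $z_n$ gives $\sigma^{LS}_n(K) \leq c\, n$. I expect the only delicate point to be the identification of $H^{-1}$ as the correct intermediate space: keeping the whole computation in $L^2$ and trying to absorb the perturbation $C = K - K_0$ fails, since the prefactor $n$ multiplies the $C$-term as well. By contrast, applying Lemma 2.6(b) to the full operator $K$ (rather than splitting off $C$ and working with $K_0$) makes the perturbation invisible and isolates the single, unavoidable power of $n$ coming from the inverse inequality.
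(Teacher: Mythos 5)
Your proof is correct, and it takes essentially the approach behind the result the paper invokes: the paper offers no argument of its own here, only the citation to [18, Lemma 3.19], and your chain --- bounded invertibility of $K:H^{-1}(0,2\pi)\to L^{2}(0,2\pi)$ from Lemma 2.6(b) with $s=0$, followed by the inverse inequality of Lemma 2.2 with $r=0$, $s=-1$ --- is the standard derivation of that cited stability estimate, assembled correctly from the paper's own lemmas. The concluding bound $\sigma^{LS}_{n}(K)\leq cn$ and the remark that the single power of $n$ is sharp (witnessed by $\Psi_n=e^{int}$ via Lemma 2.5) are both right as well.
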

\begin{proof}
See [18, Lemma 3.19].
\end{proof}
In the following, we prove that (4.2) holds for $ K $ in $ L^2(0,2\pi) $.
\begin{proof}
Choosing $ z_n = P^{LS}_n x $, we have
\begin{equation*}
\min_{z_n \in X^{LS}_n} \{ \Vert x - z_n \Vert + \sigma_n \Vert K (x-z_n) \Vert \}
\end{equation*}
\begin{equation*}
\leq \Vert x - P^{LS}_n x  \Vert + \sigma_n(K) \Vert K ( x - P^{LS}_n x) \Vert
\end{equation*}
\begin{equation}
\leq 2 \Vert x \Vert + c n \Vert K ( x - P^{LS}_n x) \Vert  \quad \textrm{by Lemma 4.2},
\end{equation}
where $ c > 0 $ is a constant independent of $ n $. Applying Lemma 2.6 (b) with $ s = 0 $, we know that $ K $ is bounded from $ H^{-1}(0,2\pi) $ onto $ L^2(0,2\pi) $, thus,
\begin{equation*}
\Vert K ( x - P^{LS}_n x) \Vert_{L^2} \leq \Vert K \Vert_{H^{-1} \to L^2} \Vert x - P^{LS}_n x \Vert_{H^{-1}} \  (L^2(0,2\pi) \subseteq H^{-1} (0,2\pi))
\end{equation*}
\begin{equation*}
 \leq \Vert K \Vert_{H^{-1} \to L^2} \frac{1}{n} \Vert x \Vert_{L^2} \quad \textrm{for all} \ x \in L^2(0,2\pi).
\end{equation*}
with Lemma 2.1 of  $ r = 0 $ and $ s = -1 $. Together with (4.5), it yields that
\begin{equation*}
\min_{z_n \in X^{LS}_n} \{ \Vert x - z_n \Vert + \sigma_n \Vert K (x-z_n) \Vert \} \leq   (2+c \Vert K \Vert_{H^{-1} \to L^2} ) \Vert x \Vert_{L^2}.
\end{equation*}
This complete the proof of (4.2) in $ L^2(0,2\pi) $.
\end{proof}
\indent  Thus by Lemma 4.1, we have error estimate for least squares method
\begin{equation*}
\Vert K^{-1} b - {K^{LS}_{n}}^{-1} Q^{LS}_n b^{\delta} \Vert_{L^2} \leq C^{LS} n \delta + C^{LS}_1 \min\{  \Vert K^{-1} b - z_n \Vert_{L^2}: z_n \in X^{LS}_n \}
\end{equation*}
where $ C^{LS}_1:= 3 +C^{LS} \Vert K \Vert_{H^{-1} \to L^2}  $.
With further regularity assumption on exact solution $ K^{-1} b \in H^r(0,2\pi), \ (r \leq 2)$, that is, $ b \in H^{r+1} (0,2\pi) $, by Lemma 2.1,
\begin{equation*}
\Vert K^{-1} b - {K^{LS}_{n}}^{-1} Q^{LS}_n b^{\delta} \Vert_{L^2} \leq C^{LS} n \delta + \frac{C^{LS}_1}{n^r} \Vert x \Vert_{H^r}
\end{equation*}
Choosing $ n = \delta^{-\frac{1}{r+1}} $, we have
 \begin{equation*}
\Vert K^{-1} b - {K^{LS}_{n}}^{-1} Q^{LS}_n b^{\delta} \Vert_{L^2} = O(\delta^{\frac{r}{r+1}}).
  \end{equation*}
This is optimal since we can examine that the rate $ O(\delta^{\frac{2\mu}{2\mu+1}}) $ is obtained for $ K^{-1} b \in \mathcal{R}({(K^* K)}^\mu) \subseteq H^{2\mu}(0,2\pi), \quad \mu = \frac{1}{2} \ \textrm{or} \ 1 $.
\newline \indent In the following, we complete the divergence analysis by Lemma 3.1. The key point that $ (3.3) $ holds for $ K $ in $ L^2(0,2\pi) $ is transformed into (4.2) (by Lemma 4.1) which is proven in Proof 9. Besides, we verify the completeness condition for $ (\{ X^{LS}_n \}, \{ Y^{LS}_n \}) $ in (4.1) (See Appendix A). Thus we have the divergence result for least square method as
\begin{theorem}
For $ b \in L^2(0,2\pi) \setminus H^1(0,2\pi) $, the least squares method with Fourier basis for Symm's integral equation diverges.
\end{theorem}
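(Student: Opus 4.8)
The plan is to invoke the unified divergence result of Lemma 3.1 with $A = K$, $X = Y = L^2(0,2\pi)$, $X_n = X^{LS}_n = \mathrm{span}\{e^{ikt}\}^n_{k=-n}$ and $Y_n = Y^{LS}_n = K(X^{LS}_n)$. Lemma 3.1 has three hypotheses: the completeness condition $P^{LS}_n \stackrel{s}{\to} I$ and $Q^{LS}_n \stackrel{s}{\to} I$, the uniform bound (3.3), and the membership statement $b \notin \mathcal{D}(K^\dagger)$. The completeness condition is supplied by Appendix A. For (3.3) I would appeal to Remark 4.1: since (4.2) has just been verified for $K$ in $L^2$, Lemma 4.1 together with Banach--Steinhaus yields $\sup_n \Vert {K^{LS}_n}^{-1} Q^{LS}_n K \Vert < \infty$, and because $K^{LS}_n$ is invertible onto its range its Moore--Penrose inverse equals ${K^{LS}_n}^{-1}$, so this bound is exactly (3.3).

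The substantive part of the argument is to render the abstract hypothesis $b \notin \mathcal{D}(K^\dagger)$ concrete, and for this I would use Lemma 2.6. By part (b) with the index $s = 1$, the operator $K$ maps $H^0(0,2\pi) = L^2(0,2\pi)$ bijectively onto $H^1(0,2\pi)$ with bounded inverse; viewing $K$ as an operator on $L^2(0,2\pi)$, this identifies its range as $\mathcal{R}(K) = H^1(0,2\pi)$. By part (a) the operator $K$ is self-adjoint, and by part (b) it is injective, so
\[
\mathcal{R}(K)^\perp = \mathcal{N}(K^*) = \mathcal{N}(K) = \{0\}.
\]
Consequently $\overline{\mathcal{R}(K)} = L^2(0,2\pi)$, hence $Q_{\overline{\mathcal{R}(K)}} = I$, and
\[
\mathcal{D}(K^\dagger) = \mathcal{R}(K) \oplus \mathcal{R}(K)^\perp = H^1(0,2\pi).
\]

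With these identifications Lemma 3.1 applies directly: for every $b \in L^2(0,2\pi)\setminus H^1(0,2\pi) = L^2(0,2\pi)\setminus \mathcal{D}(K^\dagger)$ one obtains
\[
\lim_{n\to\infty} \Vert {K^{LS}_n}^{-1} Q^{LS}_n b \Vert = \lim_{n\to\infty} \Vert K^\dagger_n Q_n Q_{\overline{\mathcal{R}(K)}} b \Vert = \infty,
\]
where the left-hand norm is exactly that of the least squares reconstruction, giving the claimed divergence. The step I expect to demand the most care is the range identification $\mathcal{R}(K) = H^1(0,2\pi)$ combined with $\mathcal{R}(K)^\perp = \{0\}$: it is precisely these two facts, read off from Lemma 2.6(b) at the single critical index $s = 1$ and from self-adjointness respectively, that convert the operator-theoretic condition $b \notin \mathcal{D}(K^\dagger)$ into the sharp smoothness gap $b \in L^2 \setminus H^1$ appearing in the statement, and that ensure the projector $Q_{\overline{\mathcal{R}(K)}}$ drops out as the identity.
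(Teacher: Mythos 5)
Your proposal is correct and follows essentially the same route as the paper: Lemma 3.1 applied via the verification of (4.2) (hence Remark 4.1 and Banach--Steinhaus for the uniform bound (3.3)), the completeness condition from Appendix A, and the identification $\mathcal{R}(K)=H^1(0,2\pi)$ from Lemma 2.6(b) with $s=1$. The only cosmetic difference is that you obtain $\mathcal{R}(K)^\perp=\{0\}$ from self-adjointness and injectivity, whereas the paper reads it off from the density of $H^1(0,2\pi)$ in $L^2(0,2\pi)$; both are immediate.
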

\begin{proof}
The proof of (4.2) for $ K $ in $ L^2(0,2\pi) $ with Lemma 4.1 gives that, for every $ b \notin \mathcal{D}(K^\dagger) = \mathcal{R}(K) \oplus \mathcal{R}(K)^\perp $,
\begin{equation*}
\lim_{n \to \infty} \Vert {K^{LS}_n}^{-1} Q^{LS}_n Q_{\overline{\mathcal{R}(K)}} b \Vert_{L^2} = \infty.
\end{equation*}
Since application of Lemma 2.6 (b) with $ s = 1 $ gives $ \mathcal{R}(K) = H^1(0,2\pi) $, with the fact that $ H^1(0,2\pi) $ is dense in $ L^2(0,2\pi) $, we have $ \mathcal{R}(K)^\perp = \overline{\mathcal{R}(K)}^\perp = 0 $ and $ Q_{\overline{\mathcal{R}(K)}} = I_{L^2} $.
This yields that, for $ b \in L^2(0,2\pi) \setminus H^1(0,2\pi) $,
\begin{equation*}
\lim_{n \to \infty} \Vert {K^{LS}_n}^{-1} Q^{LS}_n b \Vert_{L^2} = \infty.
\end{equation*}
This complete the proof.
\end{proof}
\begin{remark}
Actually the proof of (4.2) in $ L^2(0,2\pi) $ should have been contained in the convergence analysis part of least square method in [18, Theorem 3.20]. However, they are all missing. Here we supplement the estimate also to support analysis there.
\end{remark}
The third item in Lemma 4.1 with Lemma 4.2 gives that $ \Vert {K^{LS}_n}^{-1} Q^{LS}_n \Vert_{L^2 \to L^2} \leq cn $. Together with Theorem 4.1, it leads to the divergence rate result.
\begin{theorem}
For $ b \in L^2(0,2\pi) \setminus H^1(0,2\pi) $, the least squares method with Fourier basis with $ \Vert K^{LS}_n Q^{LS}_n b \Vert_{L^2} = O(n). $
\end{theorem}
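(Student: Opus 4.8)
The plan is to obtain the rate directly from the operator-norm estimate that has just been assembled, so that Theorem 4.2 becomes the matching upper bound to the qualitative divergence of Theorem 4.1. First I would observe that the quantity $\Vert {K^{LS}_n}^{-1} Q^{LS}_n b \Vert_{L^2}$ is exactly the $L^2$-norm of the $n$-th least squares approximant computed from the exact data $b$. Theorem 4.1 already forces this sequence to tend to infinity for every $b \in L^2(0,2\pi) \setminus H^1(0,2\pi)$, and the aim here is to cap its growth at first order; combining this upper bound with Theorem 4.1 then certifies that the divergence is of order $n$, which is what the phrase \emph{optimal first order} in Theorem 1.1 refers to.

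The proof rests on composing two facts that are already in hand. The third assertion of Lemma 4.1 gives $\Vert R^{LS}_n \Vert = \Vert {K^{LS}_n}^{-1} Q^{LS}_n \Vert \leq \sigma^{LS}_n$, and this estimate is legitimate precisely because the hypothesis (4.2) of Lemma 4.1 was verified for $K$ in $L^2(0,2\pi)$ in Proof 9, which in particular guarantees that $K^{LS}_n$ is invertible. The stability estimate of Lemma 4.2 then supplies $\sigma^{LS}_n(K) \leq cn$ with $c > 0$ independent of $n$. Chaining these two inequalities yields the operator-norm bound
\begin{equation*}
\Vert {K^{LS}_n}^{-1} Q^{LS}_n \Vert_{L^2 \to L^2} \leq cn.
\end{equation*}

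It then remains only to evaluate this bounded family at the fixed vector $b$. Since $b \in L^2(0,2\pi)$ does not depend on $n$, applying the operator-norm estimate gives
\begin{equation*}
\Vert {K^{LS}_n}^{-1} Q^{LS}_n b \Vert_{L^2} \leq cn \Vert b \Vert_{L^2},
\end{equation*}
and as $\Vert b \Vert_{L^2}$ is a finite constant independent of $n$, the right-hand side is precisely $O(n)$.

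I do not expect any genuine obstacle at this stage: all the analytic work has been front-loaded into Lemma 4.2, whose stability bound $\sigma^{LS}_n(K) \leq cn$ is the single quantitatively nontrivial ingredient and is cited from [18, Lemma 3.19]. The only point requiring care is logical rather than computational, namely making explicit that (4.2) was established so that the third item of Lemma 4.1 may be invoked and ${K^{LS}_n}^{-1} Q^{LS}_n$ is well-defined; once that is acknowledged, the rate follows by a one-line composition of the two stated estimates, and the lower matching bound (optimality) is deferred to the concrete computation of Section 7.
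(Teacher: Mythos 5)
Your proposal is correct and follows essentially the same route as the paper: the paper also obtains $\Vert {K^{LS}_n}^{-1} Q^{LS}_n \Vert_{L^2 \to L^2} \leq cn$ by combining the third item of Lemma 4.1 with the stability estimate $\sigma^{LS}_n(K) \leq cn$ of Lemma 4.2, and then evaluates at the fixed datum $b$. Your added remarks on why (4.2) must first be verified so that ${K^{LS}_n}^{-1}$ is well-defined only make explicit what the paper leaves implicit.
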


\section{Analysis for Dual least square method}
For dual least square method with $ X = Y = L^2(0,2\pi) $, set
\begin{equation}
  Y^{DLS}_n = span \{ e^{ikt} \}^n_{k=-n} $, \quad $ X^{DLS}_n = K^* (Y^{DLS}_n),
\end{equation}
We first prepare a convergence lemma for dual least squares method
\begin{lemma}
Let $ X $ and $ Y $ be Hilbert spaces and $ A: X \to Y $ be a linear, bounded, and injective such that the range $ \mathcal{R}(A) $ is dense in $ Y $. Let $ Y^{DLS}_n \subseteq Y $ be finite-dimensional subspaces such that $ \bigcup_{n \in \mathrm{N}} Y^{DLS}_n  $ is dense in $ Y $. Then the dual least square method is uniquely solvable, that is, $ A^{DLS}_n:= Q_{Y^{DLS}_n} A P_{X^{DLS}_n} : X^{QLS}_n \to Y^{QLS}_n $ is invertible, where $ X^{DLS}_n = A^*(Y^{DLS}_n) $, and convergent, that is,
\begin{equation}
{A^{QLS}_n}^{-1} Q^{QLS}_n b \stackrel{s}{\to} A^{-1} b, \quad (b \in \mathcal{R}(A))
\end{equation}
with $ \Vert R^{QLS}_n \Vert \leq \sigma^{QLS}_n $, where
 \begin{equation*}
 \sigma^{QLS}_n := \max \{ \Vert z_n \Vert: z_n \in Y^{DLS}_n, \Vert A^* (Y^{DLS}_n) \Vert = 1 \}
 \end{equation*}
and $ R^{QLS}_n:= {A^{QLS}_n}^{-1} Q^{QLS}_n : Y \to X^{QLS}_n \subseteq X $. Furthermore, we have error estimate
 \begin{equation*}
 \Vert A^{-1} b - {A^{DLS}_n}^{-1} Q^{DLS}_n b^\delta \Vert \leq \sigma^{DLS}_n \delta + c \min \{ \Vert A^{-1} b - z_n \Vert : z_n \in A^*(Y_n) \}
 \end{equation*}
 where $ c =2 $. Notice that $ (\{X^{QLS}_n\}, \{Y^{QLS}_n\})_{n \in \mathrm{N}}$ are all not specifically chosen.
\end{lemma}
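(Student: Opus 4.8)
The plan is to exploit the fact that the dual least squares approximation is a Galerkin scheme whose solution coincides with the orthogonal projection of the exact solution onto $X^{DLS}_n = A^*(Y^{DLS}_n)$. First I would reparametrize the trial space: since $\mathcal{R}(A)$ is dense in $Y$, the adjoint $A^*$ is injective, so every $x_n \in X^{DLS}_n$ is written uniquely as $x_n = A^* w_n$ with $w_n \in Y^{DLS}_n$, and in particular $\dim X^{DLS}_n = \dim Y^{DLS}_n$. The defining relation $Q^{DLS}_n A x_n = Q^{DLS}_n b$ is equivalent to $\langle A x_n - b, y_n \rangle = 0$ for all $y_n \in Y^{DLS}_n$, which upon substituting $x_n = A^* w_n$ becomes the normal equation $\langle A^* w_n, A^* y_n \rangle = \langle b, y_n \rangle$ for all $y_n \in Y^{DLS}_n$.

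For unique solvability I would observe that $(w,y) \mapsto \langle A^* w, A^* y \rangle$ is an inner product on the finite-dimensional space $Y^{DLS}_n$ (positive definiteness being exactly the injectivity of $A^*$), so the normal equation is uniquely solvable; equivalently, testing the homogeneous equation with $y_n = w_n$ forces $\|A^* w_n\| = 0$, hence $x_n = 0$, so $A^{DLS}_n$ is injective between the equidimensional spaces $X^{DLS}_n$ and $Y^{DLS}_n$ and therefore invertible.

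The key step is convergence. Inserting $b = Ax$ with $x = A^{-1}b$ and using $\langle b, y_n \rangle = \langle Ax, y_n \rangle = \langle x, A^* y_n \rangle$, the normal equation yields the Galerkin orthogonality $\langle x_n - x, A^* y_n \rangle = 0$ for all $y_n \in Y^{DLS}_n$, i.e. $\langle x_n - x, v_n \rangle = 0$ for all $v_n \in X^{DLS}_n$. Thus $x_n = P_{X^{DLS}_n} x$ is precisely the orthogonal projection of the true solution, so $\|x - x_n\| = \min\{\|x - z_n\| : z_n \in X^{DLS}_n\}$, and convergence reduces to density of $\bigcup_n X^{DLS}_n = A^*\big(\bigcup_n Y^{DLS}_n\big)$ in $X$. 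Here I would argue that, since $\bigcup_n Y^{DLS}_n$ is dense in $Y$ and $A^*$ is bounded, $A^*\big(\bigcup_n Y^{DLS}_n\big)$ is dense in $\mathcal{R}(A^*)$, which is in turn dense in $\overline{\mathcal{R}(A^*)} = \mathcal{N}(A)^\perp = X$ by injectivity of $A$. I expect this density chain to be the only genuinely delicate point of the argument.

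Finally, for the stability constant and error estimate I would take $y_n = w_n$ in the normal equation to get $\|x_n\|^2 = \|A^* w_n\|^2 = \langle b, w_n \rangle \leq \|b\|\,\|w_n\| \leq \sigma^{DLS}_n \|b\|\,\|x_n\|$, where the last inequality uses $\sigma^{DLS}_n = \max\{\|z_n\| : z_n \in Y^{DLS}_n,\ \|A^* z_n\| = 1\}$; this gives $\|R^{DLS}_n\| \leq \sigma^{DLS}_n$. The perturbed estimate then follows from the split $\|A^{-1}b - R^{DLS}_n b^\delta\| \leq \|x - x_n\| + \|R^{DLS}_n(b - b^\delta)\| \leq \min\{\|x - z_n\| : z_n \in A^*(Y^{DLS}_n)\} + \sigma^{DLS}_n \delta$, the best-approximation identity furnishing the constant (comfortably within the stated $c = 2$). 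As an alternative I would note that the entire statement can be derived from Lemma 4.1 applied to $A^*$ with the roles of the trial and test spaces exchanged, since dual least squares for $A$ is least squares for $A^*$; but the direct projection argument above is more transparent and delivers the sharp constants.
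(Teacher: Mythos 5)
Your proof is correct. The paper does not actually prove this lemma --- it only cites [18, Theorem 3.11] --- and your argument (rewriting the scheme as the normal equation $\langle A^*w_n, A^*y_n\rangle = \langle b, y_n\rangle$, identifying the dual least squares solution with the orthogonal projection of $A^{-1}b$ onto $A^*(Y^{DLS}_n)$, and obtaining stability by testing with $w_n$) is precisely the standard proof of that cited result, so you have simply supplied the argument the paper omits; the one tacit point, inherited from the lemma's own phrasing, is that deducing $P_{X^{DLS}_n}x \to x$ from density of $\bigcup_n X^{DLS}_n$ uses nestedness of the subspaces, which holds in the application.
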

\begin{proof}
This is an operator equation version of [18 , Theorem 3.11].
\end{proof}
\begin{remark}
If the conditions of Lemma 5.1 is satisfied, then by the same sake in Remark 4.1, we have $ \sup_n \Vert {A^{DLS}_n}^{-1} Q^{DLS}_n A \Vert < \infty $, the key point for divergence analysis of dual least squares methods holds.
\end{remark}
Application of Lemma 2.6 (b) of $ s = 1 $ guarantee the injectivity and dense range of $ K $ in $ L^2(0,2\pi) $. Since $ K $ is self-adjoint (Lemma 2.6 (a)), the third item in Lemma 5.1 with $ Y^{QLS}_n = X^{LS}_n $ gives that $ \sigma^{QLS}_n(K) \leq c^{QLS} n $. Thus, we have error estimate for dual least squares method
\begin{equation*}
\Vert K^{-1} b - {K^{QLS}_{n}}^{-1} b^{\delta} \Vert_{L^2} \leq c^{QLS} n \delta + 2 \min\{  \Vert K^{-1} b - z_n \Vert_{L^2}: z_n \in K (Y^{DLS}_n) \}
\end{equation*}
The $ K (Y^{DLS}_n) $ prevent us to construct a more precise estimate by using Lemma 2.1.
\newline \indent Now we start divergence analysis on dual least squares method. Since $ K $ is self-adjoint in $ L^2(0,2\pi) $ (Lemma 2.6 (a)), $ X^{DLS}_n = K (Y^{DLS}_n)$, similar to the least square case, we can verify that $ (\{ X^{DLS}_n \}, \{Y^{DLS}_n \}) $ satisfies the completeness condition. Thus we have the following divergence result
\begin{theorem}
For $ b \in L^2(0,2\pi) \setminus H^1(0,2\pi) $, the dual least square method with Fourier basis diverges for Symm's integral equation of the first kind, that is,
\begin{equation*}
\lim_{n \to \infty} \Vert {K^{DLS}_n}^{-1} Q^{DLS}_n b \Vert_{L^2} = \infty.
\end{equation*}
Furthermore, the assertion holds for arbitrary $ L^2(0,2\pi) $ basis $ \{ \xi_k \}^{\infty}_{k=1} $,  for instance, wavelet, piecewise constant, Legendre polynomials and so on. The same proof can be applied without change.
\end{theorem}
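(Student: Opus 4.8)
The plan is to run the dual least squares method through exactly the machinery that produced Theorem 4.1 for least squares, the only structural changes being that the roles of trial and test spaces are interchanged and that the self-adjointness of $K$ is exploited. First I would verify that $K$ meets all hypotheses of Lemma 5.1 in $X = Y = L^2(0,2\pi)$. Lemma 2.6(a) gives that $K$ is bounded (in fact compact and self-adjoint); Lemma 2.6(b) with $s = 1$ shows $K$ is injective and maps $L^2 = H^0$ onto $H^1(0,2\pi)$, so that $\mathcal{R}(K) = H^1(0,2\pi)$ is dense in $Y$; and $\bigcup_{n} Y^{DLS}_n = \bigcup_n \mathrm{span}\{e^{ikt}\}^n_{k=-n}$ is dense in $L^2(0,2\pi)$. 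With the hypotheses of Lemma 5.1 in force, Remark 5.1 delivers $\sup_n \Vert {K^{DLS}_n}^{-1} Q^{DLS}_n K \Vert_{L^2 \to L^2} < \infty$, which is precisely condition (3.3) of Lemma 3.1 for $A = K$.

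Next I would confirm the completeness condition $P_{X^{DLS}_n} \stackrel{s}{\to} I_X$ and $Q^{DLS}_n \stackrel{s}{\to} I_Y$. The test-space half is immediate, since $Q^{DLS}_n$ is the Fourier partial-sum projection and converges strongly to $I_{L^2}$. For the trial-space half I use self-adjointness to write $X^{DLS}_n = K^*(Y^{DLS}_n) = K(Y^{DLS}_n)$. Because $\bigcup_n Y^{DLS}_n$ is dense and $K$ is continuous, $\bigcup_n K(Y^{DLS}_n) = K(\bigcup_n Y^{DLS}_n)$ is dense in $\overline{\mathcal{R}(K)} = L^2(0,2\pi)$; for a nested family of closed subspaces with dense union the orthogonal projections converge strongly to the identity, giving $P_{X^{DLS}_n} \stackrel{s}{\to} I_X$.

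With both (3.3) and the completeness condition verified, I would invoke Lemma 3.1 with $A = K$. It yields, for every $b \notin \mathcal{D}(K^\dagger) = \mathcal{R}(K) \oplus \mathcal{R}(K)^\perp$, the divergence $\lim_{n \to \infty} \Vert {K^{DLS}_n}^{-1} Q^{DLS}_n Q_{\overline{\mathcal{R}(K)}} b \Vert_{L^2} = \infty$. Since $\mathcal{R}(K) = H^1(0,2\pi)$ is dense in $L^2$, we have $\mathcal{R}(K)^\perp = 0$, hence $Q_{\overline{\mathcal{R}(K)}} = I_{L^2}$ and $\mathcal{D}(K^\dagger) = H^1(0,2\pi)$; the divergence therefore holds exactly for $b \in L^2(0,2\pi) \setminus H^1(0,2\pi)$, which is the stated conclusion. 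For the final clause on an arbitrary $L^2$-basis $\{\xi_k\}$, I would observe that the trigonometric structure of $Y^{DLS}_n$ was never used: the argument rested only on the denseness of $\bigcup_n Y^{DLS}_n$ in $L^2$ and on the mapping properties of $K$ from Lemma 2.6. Replacing $\{e^{ikt}\}$ by any complete system $\{\xi_k\}$ therefore leaves every step intact.

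The main obstacle I anticipate is the trial-space half of the completeness condition, since $X^{DLS}_n = K(Y^{DLS}_n)$ consists of images under the smoothing operator $K$ rather than the ambient Fourier spaces, so strong convergence of $P_{X^{DLS}_n}$ is not immediate. The resolution is that denseness of $\bigcup_n X^{DLS}_n$ reduces, through the density of $\bigcup_n Y^{DLS}_n$ and the denseness of $\mathcal{R}(K)$ in $L^2$, to a short continuity argument, after which strong convergence of the projections is automatic. This same reduction is precisely what makes the proof basis-independent and so validates the ``furthermore'' assertion.
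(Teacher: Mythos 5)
Your proposal is correct and follows essentially the same route as the paper: verify the hypotheses of Lemma 5.1 (injectivity and dense range of $K$ via Lemma 2.6(b) with $s=1$), obtain the uniform bound $\sup_n \Vert {K^{DLS}_n}^{-1} Q^{DLS}_n K \Vert < \infty$ from Remark 5.1, check the completeness condition using self-adjointness to write $X^{DLS}_n = K(Y^{DLS}_n)$ as in Appendix A, and then apply Lemma 3.1 with $\mathcal{D}(K^\dagger) = H^1(0,2\pi)$. Your treatment of the trial-space projections and the basis-independence observation are in fact spelled out more explicitly than in the paper, which only sketches these steps.
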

\begin{remark}
  Lemma  5.1 implies that, for $ b \in H^1(0,2\pi) $, the dual least square method with arbitrary $ L^2(0,2\pi) $ basis converges, that is,
\begin{equation*}
{K^{DLS}_n}^{-1} Q^{DLS}_n b \stackrel{s}{\to} K^{-1} b \quad (n \to \infty).
\end{equation*}
Here we give a complete division to all $ b \in L^2(0,2\pi) $ for convergence or divergence in dual least square method with arbitrary $ L^2(0,2\pi)$ basis.
\end{remark}
The divergence result with $ \sigma^{DLS}_n (K) = \sigma^{LS}_n (K) \leq cn $ gives
\begin{theorem}
For $ b \in L^2(0,2\pi) \setminus H^1(0,2\pi) $, the dual least square method with Fourier basis diverges with rate $ O(n) $, that is,
$ \Vert {K^{DLS}_n}^\dagger Q^{DLS}_n b \Vert_{L^2} = O(n) $
\end{theorem}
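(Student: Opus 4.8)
The plan is to obtain the stated growth rate as a direct corollary of the divergence already established in Theorem 5.2, reinforced by a uniform operator-norm bound of order $n$ on the reconstruction operator $R^{DLS}_n := {K^{DLS}_n}^{-1} Q^{DLS}_n$. Concretely, I would show $\Vert R^{DLS}_n \Vert_{L^2 \to L^2} \leq cn$ for a constant $c$ independent of $n$; applied to a fixed $b$ this yields the upper bound $\Vert {K^{DLS}_n}^{-1} Q^{DLS}_n b \Vert_{L^2} \leq cn \Vert b \Vert_{L^2} = O(n)$, and combining it with the fact (Theorem 5.2) that the same quantity tends to infinity for $b \in L^2(0,2\pi) \setminus H^1(0,2\pi)$ pins the divergence down to first order.

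First I would invoke Lemma 5.1: since $K$ is injective with dense range in $L^2(0,2\pi)$ (Lemma 2.6(b) with $s = 1$), the dual least squares scheme is uniquely solvable, so ${K^{DLS}_n}^\dagger$ agrees with ${K^{DLS}_n}^{-1}$ on $Y^{DLS}_n$ and $R^{DLS}_n = {K^{DLS}_n}^{-1} Q^{DLS}_n$ is well defined, with the norm control
\begin{equation*}
\Vert R^{DLS}_n \Vert_{L^2 \to L^2} \leq \sigma^{DLS}_n(K)
\end{equation*}
furnished by the third item of Lemma 5.1.

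The next, and really the only substantive, step is the identification $\sigma^{DLS}_n(K) = \sigma^{LS}_n(K)$. By definition $\sigma^{DLS}_n(K) = \max \{ \Vert z_n \Vert : z_n \in Y^{DLS}_n, \ \Vert K^* z_n \Vert = 1 \}$; since $K$ is self-adjoint in $L^2(0,2\pi)$ (Lemma 2.6(a)) one may replace $K^*$ by $K$, and because the discretizing space is the same trigonometric space $Y^{DLS}_n = X^{LS}_n = \mathrm{span} \{ e^{ikt} \}_{k=-n}^{n}$, this extremal value coincides verbatim with $\sigma^{LS}_n(K) = \max \{ \Vert z_n \Vert : z_n \in X^{LS}_n, \ \Vert K z_n \Vert = 1 \}$. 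The stability estimate Lemma 4.2 then yields $\sigma^{LS}_n(K) \leq cn$, whence $\sigma^{DLS}_n(K) \leq cn$ and therefore $\Vert R^{DLS}_n \Vert_{L^2 \to L^2} \leq cn$.

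Finally, for any fixed $b \in L^2(0,2\pi) \setminus H^1(0,2\pi)$ the chain
\begin{equation*}
\Vert {K^{DLS}_n}^\dagger Q^{DLS}_n b \Vert_{L^2} = \Vert R^{DLS}_n b \Vert_{L^2} \leq cn \Vert b \Vert_{L^2}
\end{equation*}
delivers the upper bound $O(n)$, while Theorem 5.2 guarantees the left-hand side tends to infinity; together they say the method diverges and does so at first order. I expect the main (and rather mild) obstacle to be the careful bookkeeping that the self-adjointness of $K$ genuinely lets $\sigma^{DLS}_n(K)$ and $\sigma^{LS}_n(K)$ be read off from the same extremal problem, after which everything is a verbatim chaining of Lemma 5.1, Lemma 2.6 and Lemma 4.2. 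Note that the present statement supplies only the upper bound $O(n)$; the sharpness (optimality) of the first-order rate is not proven here but is confirmed by the explicit computation in Section 7.
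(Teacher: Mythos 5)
Your proposal is correct and follows essentially the same route as the paper: the paper likewise combines the bound $\Vert {K^{DLS}_n}^{-1} Q^{DLS}_n \Vert \leq \sigma^{DLS}_n(K)$ from Lemma 5.1 with the identification $\sigma^{DLS}_n(K) = \sigma^{LS}_n(K) \leq cn$ (via self-adjointness of $K$ and the stability estimate of Lemma 4.2) and the already-proved divergence theorem for dual least squares. The only difference is that you spell out the extremal-problem bookkeeping that the paper leaves implicit.
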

 Now we know the dual least squares method with Fourier basis diverges for $ b \in H^r, \ 0< r <1 $ in $ L^2 $ norm. This fact motivates us to find a convergence for $ b \in H^r, \ 0 \leq r <1 $ in a weaker setting.  Thus we further consider the convergence in $ H^{-1} $. Let $ K $ maps from $H^{-1}(0,2\pi) $ to $ L^2(0,2\pi) $ (Lemma 2.6 (b) $ s= 0 $). The application of Lemma 6.1 directly gives that
\begin{equation*}
\Vert  {K^{DLS}_n}^\dagger Q^{DLS}_n b - K^{-1} b \Vert_{H^{-1}} \leq 2 \min \{ \Vert K^{-1} b - z_n \Vert_{H^{-1}}: z_n \in K^*(Y^{DLS}_n) \} \to 0.
\end{equation*}
\begin{remark}
Notice that, if we see $ K $ as an operator mapping from $ H^{-1}(0,2\pi) $ to $ L^2(0,2\pi) $, then $ \mathcal{R}(K) = L^2(0,2\pi) $, $ K^{-1} $ is bounded, that is, $ Kx = b $ is well-posed. Hence we have no need to consider the influence of noise in error estimate.
\end{remark}
\section{Analysis for Bubnov-Galerkin method}
Set $ X =Y = L^2(0,2\pi) $ and $ X^{BG}_n = Y^{BG}_n = span \{ e^{ikt}\}^n_{k=-n} $, then $ (\{ X^{BG}_n \}, \{Y^{BG}_n \}) $ satisfies the completeness condition. To prepare the convergence analysis, we introduce a basic lemma:
\begin{lemma}
Let $ (V, X, V^* ) $ be a Gelfand triple, and $ X^{BG}_n \subseteq V $ be finite-dimensional subspaces such that $ \bigcup_{n \in \mathrm{N}} X^{BG}_n $ is dense in $ X $. Let $ A: V^* \to V $ be one-to-one and satisfies G\"{a}rding's inequality with some compact operator $ C: V^* \to V $, that is, there exists $ \gamma > 0 $ such that
 \begin{equation*}
 \Re \langle x, Ax \rangle \geq \gamma \Vert x \Vert^2_{V^*} -  \Re \langle x, Cx \rangle , \quad (\textrm{for all} \ x \in V^*).
 \end{equation*}
 Then
 \newline \indent (a) the Bubnov-Galerkin system is uniquely solvable, that is, $ A^{BG}_n := P^{BG}_n A P^{BG}_n : X^{BG}_n \to X^{BG}_n $  is invertible, where $ X = Y $ and $ X^{BG}_n = Y^{BG}_n $, and converge in $ V^* $ with
 \begin{equation*}
 \Vert A^{-1} b - {A^{BG}_n}^{-1} P^{BG}_n b^\delta \Vert_{V^*} \leq c^{BG}_1 \min \{ \Vert x - z_n \Vert_{V^*}: z_n \in X^{BG}_n \}
 \end{equation*}

 \indent (b) Furthermore, if there exists $ c > 0 $ with
\begin{equation}
\Vert u - P^{BG}_n u \Vert_{V^*} \leq \frac{c}{\rho_n} \Vert u \Vert \quad {\textrm{for all}} \ u \in X
\end{equation}
 then the Bubnov-Galekrin method is also convergent in $ X $, that is,
\begin{equation}
{A^{BG}_n}^{-1} P^{BG}_n b \stackrel{s}{\to} A^{-1} b, \quad (b \in \mathcal{R}(A))
\end{equation}
with
 \begin{equation*}
 \Vert A^{-1} b - {A^{BG}_n}^{-1} P^{BG}_n b^\delta \Vert \leq \frac{1}{\gamma} \rho^2_n \delta + c^{BG} \min \{ \Vert x- z_n \Vert : z_n \in X^{BG}_n \},
 \end{equation*}
where $ R^{BG}_n:= {A^{BG}_n}^{-1} P^{BG}_n : X \to X^{BG}_n \subseteq X $, and $ \Vert R^{BG}_n \Vert \leq \frac{1}{\gamma} \rho^2_n $,
 \begin{equation*}
 \rho_n := \max \{ \Vert z_n \Vert: z_n \in X^{BG}_n, \Vert z_n \Vert_{V^*} = 1 \},
 \end{equation*}
 \begin{equation*}
  c:= \tau +1, \quad \tau:= \sup_n \Vert {A^{BG}_n}^{-1} P^{BG}_n A \Vert
  \end{equation*}
 Notice that $ \rho_n $ can be seen as a local inverse embedding constant and $ (X, \{X^{QLS}_n\}_{n \in \mathrm{N}}) $ are all not specifically chosen.
\end{lemma}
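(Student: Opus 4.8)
The plan is to run the classical Galerkin-with-G\"{a}rding machinery, but phrased entirely at the operator level as the promised operator-equation version of [18, Theorem 3.11]. First I would absorb the compact part into a coercive operator by setting $\tilde{A} := A + C$, so that the G\"{a}rding inequality becomes genuine coercivity $\Re\langle x, \tilde{A}x\rangle \geq \gamma\Vert x\Vert^2_{V^*}$ for all $x\in V^*$. The Lax--Milgram theorem then gives that $\tilde{A}:V^*\to V$ is bijective with $\Vert\tilde{A}^{-1}\Vert\leq 1/\gamma$; since coercivity is inherited on every finite-dimensional $X^{BG}_n\subseteq V$, the discrete operator $\tilde{A}_n := P^{BG}_n\tilde{A}P^{BG}_n$ is invertible on $X^{BG}_n$, and the standard C\'{e}a estimate yields the $V^*$-quasi-optimal bound $\Vert\tilde{A}^{-1}b - \tilde{A}_n^{-1}P^{BG}_n b\Vert_{V^*}\leq c\min\{\Vert\tilde{A}^{-1}b - z_n\Vert_{V^*}: z_n\in X^{BG}_n\}$.

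The heart of the argument, and the step I expect to be the main obstacle, is transferring unique solvability and quasi-optimality from $\tilde{A}$ to the true operator $A = \tilde{A} - C$. Writing $A_n = \tilde{A}_n - P^{BG}_n C P^{BG}_n$ and factoring $A_n = \tilde{A}_n\,(I - \tilde{A}_n^{-1}P^{BG}_n C P^{BG}_n)$, I would invoke the standard fact that the strong convergence $\tilde{A}_n^{-1}P^{BG}_n\to\tilde{A}^{-1}$ (from the coercive C\'{e}a estimate) together with compactness of $C$ forces the norm convergence $\tilde{A}_n^{-1}P^{BG}_n C\to\tilde{A}^{-1}C$ in $\mathcal{B}(V^*)$. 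Because $A$ is one-to-one, the operator $\tilde{A}^{-1}A = I - \tilde{A}^{-1}C$ is an injective identity-minus-compact map, hence boundedly invertible by Riesz--Fredholm theory (which in particular shows $A$ itself is bijective with bounded inverse). A Neumann-series perturbation argument then makes $I - \tilde{A}_n^{-1}P^{BG}_n C P^{BG}_n$ invertible with uniformly bounded inverses for all large $n$. This simultaneously delivers the invertibility of $A^{BG}_n$ and the $V^*$-convergence of part (a), the constant $c^{BG}_1$ absorbing $1/\gamma$ together with the uniform perturbation bound.

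For part (b) I would upgrade convergence from the weak norm $\Vert\cdot\Vert_{V^*}$ to the pivot norm $\Vert\cdot\Vert$ of $X$ by means of the local inverse embedding constant $\rho_n$, which by definition satisfies $\Vert z_n\Vert\leq\rho_n\Vert z_n\Vert_{V^*}$ for every $z_n\in X^{BG}_n$. Splitting $\Vert x - x_n\Vert\leq\Vert x - P^{BG}_n x\Vert + \Vert P^{BG}_n x - x_n\Vert$ and applying $\rho_n$ to the second, finite-dimensional term gives $\Vert P^{BG}_n x - x_n\Vert\leq\rho_n(\Vert x - P^{BG}_n x\Vert_{V^*} + \Vert x - x_n\Vert_{V^*})$, where both $V^*$-terms are of size $O(1/\rho_n)\Vert x\Vert$ by the approximation hypothesis (6.1) and the part-(a) estimate, so the product stays bounded and (6.2) follows in $X$. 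The stability bound $\Vert R^{BG}_n\Vert\leq\rho_n^2/\gamma$ then arises from two factors of $\rho_n$: the coercivity estimate $\gamma\Vert z_n\Vert^2_{V^*}\leq\vert(z_n,u)_X\vert\leq\Vert z_n\Vert\,\Vert u\Vert\leq\rho_n\Vert z_n\Vert_{V^*}\Vert u\Vert$ for $z_n = R^{BG}_n u$ yields $\Vert z_n\Vert_{V^*}\leq\rho_n\Vert u\Vert/\gamma$, and one further application of $\Vert z_n\Vert\leq\rho_n\Vert z_n\Vert_{V^*}$ returns the $X$-norm; feeding a noisy datum $b^\delta$ through $R^{BG}_n$ then contributes the term $\frac{1}{\gamma}\rho_n^2\delta$ in the final error estimate.
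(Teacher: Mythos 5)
Your proposal is, in substance, the standard proof of the result the paper merely cites (the proof given in the paper is the single line ``this is the operator equation version of [18, Theorem 3.15]''), so you are not taking a different route --- you are supplying the details the paper omits. The skeleton is right: absorb $C$ into $\tilde A := A+C$, get coercivity, Lax--Milgram and the discrete C\'ea estimate in $V^*$, transfer to $A$ by writing $A_n=\tilde A_n(I-\tilde A_n^{-1}P^{BG}_nCP^{BG}_n)$, use that strong convergence of the uniformly bounded $\tilde A_n^{-1}P^{BG}_n$ composed with the compact $C$ gives norm convergence, invoke Riesz--Fredholm for $I-\tilde A^{-1}C$ and a Neumann-series perturbation for uniformly bounded discrete inverses, then lift from $V^*$ to $X$ with $\rho_n$. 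Note that this yields invertibility of $A^{BG}_n$ only for $n\geq n_0$, which is all one can expect and is consistent with the cited theorem.

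There are two places where your write-up has a genuine gap. First, the derivation of $\Vert R^{BG}_n\Vert\leq\rho_n^2/\gamma$ via $\gamma\Vert z_n\Vert^2_{V^*}\leq\vert(z_n,u)_X\vert$ uses coercivity of $A$ itself, which you do not have: under the G\"arding inequality the correct estimate is $\gamma\Vert z_n\Vert^2_{V^*}\leq\Re(z_n,u)_X+\Re\langle z_n,Cz_n\rangle$, and the compact term cannot simply be dropped since $\Vert C\Vert$ may exceed $\gamma$. Your chain is valid for $\tilde A$, and the bound for $A$ must then be routed through $R^{BG}_n=(I-\tilde A_n^{-1}P^{BG}_nCP^{BG}_n)^{-1}\tilde A_n^{-1}P^{BG}_n$ using the uniform bound $M$ on the perturbed inverses; this gives $\Vert R^{BG}_n\Vert\leq M\rho_n^2/\gamma$ for $n\geq n_0$ rather than exactly $\rho_n^2/\gamma$. (The paper's own statement of the constant is inherited loosely from the coercive case, so you are matching the statement but not proving it as written.) Second, in part (b) your splitting establishes only the uniform bound $\sup_n\Vert I-R^{BG}_nA\Vert_{X\to X}<\infty$; to conclude the convergence (6.2) you still need the quasi-optimality step $x-R^{BG}_nAx=(I-R^{BG}_nA)(x-z_n)$ for arbitrary $z_n\in X^{BG}_n$, followed by density of $\bigcup_nX^{BG}_n$ in $X$. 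Both repairs are routine, but as written these two steps do not follow from what precedes them.
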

\begin{proof}
 This is the operator equation version of [18, Theorem 3.15] of no noise case $ \delta = 0 $.
\end{proof}
\begin{remark}
Notice that if the conditions of Lemma 6.1 is satisfied, then $ \sup_n \Vert {A^{BG}_n}^{-1} P^{BG}_n A \Vert < \infty $, By Lemma 3.1, the divergence result also holds.
\end{remark}
Following [18, Theorem 3.20], set $ V = H^{\frac{1}{2}}(0,2\pi) $ and $ V^* = H^{-\frac{1}{2}} (0,2\pi) $, with Lemma 2.6 (c) and (d) of $ s = \frac{1}{2} $, we know $ K: H^{- \frac{1}{2}} (0,2\pi) \to H^{\frac{1}{2}} (0,2\pi) $ satisfies G\"{a}rding inequality with $ - C $ defined in (2.5). Again following [18, theorem 3.20], with application of Lemma 2.2 of $ r = 0, s = - \frac{1}{2} $, we have
\begin{equation*}
\rho_n = \max \{ \Vert \psi_n \Vert_{L^2}: \psi_n \in X_n, \Vert \psi_n \Vert_{H^{-\frac{1}{2}}} = 1 \} \leq c \sqrt{n}.
\end{equation*}
By Lemma 2.1, we have
\begin{equation*}
\Vert u - P^{BG}_n u \Vert_{H^{-\frac{1}{2}}} \leq c \sqrt{n} \Vert u \Vert_{L^2} \quad {\textrm{for all}} \ u \in L^2(0,2\pi),
\end{equation*}
that is, (6.1) holds for Bubnov-Galerkin method. Now, by Lemma 6.1, we have
\begin{equation}
 \Vert K^{-1} b - {K^{BG}_{n}}^{-1} P^{BG}_n b^\delta \Vert_{L^2} \leq  c n \delta + c \Vert (I- P^{BG}_n ) K^{-1} b \Vert_{L^2}.
\end{equation}
If we further assume $ K^{-1} b \in H^r(0,2\pi), r \leq 2 $, then, by Lemma 2.1, we have
\begin{equation*}
 \Vert K^{-1} b - {K^{BG}_{n}}^{-1} P^{BG}_n b^\delta \Vert_{L^2} \leq  c n \delta + c \frac{1}{n^r} \Vert K^{-1} b \Vert_{H^r}.
\end{equation*}
As pointed in least squares case, choosing $ n = \delta^{-\frac{1}{r+1}} $, we have optimal convergence rate for Bubnov-Galerkin method
 \begin{equation*}
\Vert K^{-1} b - {K^{BG}_{n}}^{-1} Q^{BG}_n b^{\delta} \Vert_{L^2} = O(\delta^{\frac{r}{r+1}}).
  \end{equation*}
\begin{theorem}
For $ b \in L^2(0,2\pi) \setminus H^1(0,2\pi) $, the Bubnov-Galerkin method with Fourier basis for Symm's integral equation diverges, that is,
\begin{equation*}
\lim_{n \to \infty} \Vert {K^{BG}_n}^{-1} P^{BG}_n b \Vert_{L^2} = \infty.
\end{equation*}
\end{theorem}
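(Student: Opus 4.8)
The plan is to follow verbatim the template already executed for the least squares method in Theorem 4.1 and for the dual least squares method in Theorem 5.1, feeding the Gelfand-triple data assembled just above the statement into the abstract divergence mechanism of Lemma 3.1. Concretely, with $X = Y = L^2(0,2\pi)$ and $X_n^{BG} = Y_n^{BG} = \mathrm{span}\{e^{ikt}\}_{k=-n}^n$ the two orthogonal projections coincide, $Q_n = P_n^{BG}$, and the completeness condition $P_n^{BG} \stackrel{s}{\to} I_{L^2}$ has already been recorded at the opening of Section 6. The Moore-Penrose inverse $A_n^\dagger$ appearing in Lemma 3.1 reduces to the genuine inverse ${K_n^{BG}}^{-1}$, since Lemma 6.1(a) guarantees that $A_n^{BG} = P_n^{BG} K P_n^{BG}$ is invertible on $X_n^{BG}$.

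First I would certify the uniform bound (3.3). This is exactly the content of Remark 6.1: the hypotheses of Lemma 6.1 have been verified for $K$ in the triple $(H^{1/2}, L^2, H^{-1/2})$ --- injectivity and G\"{a}rding's inequality from Lemma 2.6(c),(d) with $s = 1/2$, together with the inverse embedding bound $\rho_n \le c\sqrt{n}$ and the approximation property (6.1). The convergence (6.3) then holds on $\mathcal{R}(K)$, and inserting $b = Kx$ gives the pointwise limit ${K_n^{BG}}^{-1} P_n^{BG} K x \to x$, which the Banach-Steinhaus theorem upgrades to $\sup_n \|{K_n^{BG}}^{-1} P_n^{BG} K\| < \infty$. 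This is precisely hypothesis (3.3).

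With the completeness condition and (3.3) in hand, Lemma 3.1 yields, for every $b \notin \mathcal{D}(K^\dagger) = \mathcal{R}(K) \oplus \mathcal{R}(K)^\perp$,
\begin{equation*}
\lim_{n \to \infty} \|{K_n^{BG}}^{-1} P_n^{BG} Q_{\overline{\mathcal{R}(K)}} b\|_{L^2} = \infty .
\end{equation*}
It then remains only to identify this domain. Viewing $K$ via Lemma 2.6(b) with $s = 1$ as a bijection from $H^0 = L^2(0,2\pi)$ onto $H^1(0,2\pi)$ gives $\mathcal{R}(K) = H^1(0,2\pi)$; density of $H^1(0,2\pi)$ in $L^2(0,2\pi)$ forces $\mathcal{R}(K)^\perp = \overline{\mathcal{R}(K)}^\perp = \{0\}$, hence $Q_{\overline{\mathcal{R}(K)}} = I_{L^2}$ and $\mathcal{D}(K^\dagger) = H^1(0,2\pi)$. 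Therefore for any $b \in L^2(0,2\pi) \setminus H^1(0,2\pi)$ the limit above collapses to $\lim_n \|{K_n^{BG}}^{-1} P_n^{BG} b\|_{L^2} = \infty$, which is the assertion.

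The main obstacle I anticipate lies not in this final chain, which is essentially bookkeeping, but in the legitimacy of applying Lemma 3.1 in the symmetric Bubnov-Galerkin configuration: one must check that the pseudoinverse $A_n^\dagger$ of $A_n = Q_n A P_n$ really coincides with ${K_n^{BG}}^{-1}$ and that the $Q_n$ of the abstract lemma is the single projection $P_n^{BG}$ acting on both sides of the scheme. Once these identifications are made the uniform bound (3.3) is delivered for free by the G\"{a}rding-inequality machinery, so --- in contrast to the least squares case, where the estimate (4.2) had to be proven by hand --- here the substantive work was already discharged in setting up the Gelfand triple and verifying the hypotheses of Lemma 6.1.
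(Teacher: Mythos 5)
Your proposal is correct and follows the paper's own route: verify the hypotheses of Lemma 6.1 via the Gelfand triple $(H^{1/2},L^2,H^{-1/2})$ and G\"{a}rding's inequality, obtain the uniform bound (3.3) through Banach--Steinhaus as in Remark 6.1, and invoke Lemma 3.1 together with the identification $\mathcal{R}(K)=H^1(0,2\pi)$ dense in $L^2(0,2\pi)$ exactly as in the proof of Theorem 4.1. The only nitpick is a label slip (the strong convergence statement is (6.2), not (6.3)); the mathematics is the same as the paper's.
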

The fact
\begin{equation*}
\rho_n = \max \{ \Vert \psi_n \Vert_{L^2}: \psi_n \in X_n, \Vert \psi_n \Vert_{H^{-\frac{1}{2}}} = 1 \} \leq c \sqrt{n}.
\end{equation*}
with the third item in Lemma 6.1 (b) gives that
\begin{theorem}
For $ b \in L^2(0,2\pi) \setminus H^1(0,2\pi) $, the Bubnov-Galerkin method with trigonometric basis diverges with rate $ O(n) $, that is,
$ \Vert {K^{BG}_n}^{-1} P^{BG}_n b \Vert_{L^2} = O(n) $.
\end{theorem}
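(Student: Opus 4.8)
The plan is to read off the rate $O(n)$ as a uniform operator-norm bound on the Bubnov-Galerkin solution operator, and then to couple it with the divergence already established in Theorem 6.1. Theorem 6.1 guarantees that $\Vert {K^{BG}_n}^{-1} P^{BG}_n b \Vert_{L^2} \to \infty$, so all that remains is to show this quantity grows no faster than a constant times $n$; the two statements together say precisely that the method diverges at first order.

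The central step is the amplification estimate recorded in the third item of Lemma 6.1 (b). In the present Gelfand-triple setting $(V,X,V^*) = (H^{\frac{1}{2}}(0,2\pi), L^2(0,2\pi), H^{-\frac{1}{2}}(0,2\pi))$, in which $K$ satisfies G\"{a}rding's inequality by Lemma 2.6 (c),(d), this estimate reads
\begin{equation*}
\Vert R^{BG}_n \Vert = \Vert {K^{BG}_n}^{-1} P^{BG}_n \Vert_{L^2 \to L^2} \leq \frac{1}{\gamma} \rho_n^2 ,
\end{equation*}
where $\gamma > 0$ is the coercivity constant from Lemma 2.6 (c). I would then substitute the local inverse-embedding bound already computed above Theorem 6.1, namely $\rho_n = \max \{ \Vert \psi_n \Vert_{L^2} : \psi_n \in X_n, \ \Vert \psi_n \Vert_{H^{-\frac{1}{2}}} = 1 \} \leq c \sqrt{n}$, which itself follows from Lemma 2.2 with $r = 0, \ s = -\frac{1}{2}$. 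This gives $\Vert R^{BG}_n \Vert_{L^2 \to L^2} \leq \frac{c^2}{\gamma} n$, a bound uniform in the data and of exact order $n$.

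Applying this to the fixed $b \in L^2(0,2\pi) \setminus H^1(0,2\pi)$ yields
\begin{equation*}
\Vert {K^{BG}_n}^{-1} P^{BG}_n b \Vert_{L^2} \leq \Vert R^{BG}_n \Vert_{L^2 \to L^2} \Vert b \Vert_{L^2} \leq \frac{c^2}{\gamma} n \Vert b \Vert_{L^2} = O(n),
\end{equation*}
which is the claimed upper bound. Since Theorem 6.1 already forces the left-hand side to tend to infinity, the growth is trapped between divergence and $O(n)$, so the method diverges at (at most) first order, as asserted.

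I do not expect a genuine obstacle in this upper bound: every hypothesis of Lemma 6.1 — one-to-oneness and dense range of $K$ in the chosen scale, the G\"{a}rding inequality via Lemma 2.6 (c),(d), and the approximation property (6.1) with $\rho_n \leq c\sqrt{n}$ — has been verified in the passage leading up to Theorem 6.1, so the argument is essentially a corollary of that machinery. The only delicate matter is \emph{optimality}, i.e. that the order $n$ cannot be lowered to $o(n)$; this sharpness is not contained in the operator-norm bound above and is instead confirmed by the explicit construction in Section 7, where a concrete datum in $H^r(0,2\pi) \setminus H^1(0,2\pi)$ realizes growth proportional to $n$.
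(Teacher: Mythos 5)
Your proposal is correct and follows essentially the same route as the paper: the paper derives the rate exactly by combining the bound $\Vert R^{BG}_n\Vert \leq \frac{1}{\gamma}\rho_n^2$ from the third item of Lemma 6.1 (b) with the estimate $\rho_n \leq c\sqrt{n}$, and couples it with the divergence in Theorem 6.1, with optimality deferred to the example in Section 7. Your write-up simply makes explicit the steps the paper leaves as a one-line remark preceding the theorem.
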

\begin{remark}
The core of divergence rate all locates in estimate of $ \Vert {K^\tau_n}^\dagger Q^\tau_n \Vert, \ \tau = LS, DLS, BG $, which are all potentially indicated in [18, theorem 3.20].
\end{remark}
To supplement a weaker convergence result for $ b \in H^r, 0 \leq r<1 $, we consider $ K : H^{-\frac{1}{2}} (0,2\pi)\to  H^{\frac{1}{2}} (0,2\pi) $ again. Using Lemma 7.1 (a), we know
\begin{equation*}
\Vert K^{-1} b - {K^{BG}_n}^\dagger P^{BG}_n b \Vert_{H^{-\frac{1}{2}}} \leq c^{BG}_1 \min \{\Vert K^{-1} b - z_n \Vert_{H^{-\frac{1}{2}}} : z_n \in X^{BG}_n\}
\end{equation*}
Notice, with a-priori information on smoothness of $ K^{-1} b $, above estimate can be strengthened into a more precise form with application of Lemma 2.1. This provides a convergence result in $ H^{-\frac{1}{2}} $ setting for Bubnov-Galerkin method when $ b \in H^r(0,2\pi), \frac{1}{2} \leq r < 1 $.
\section{An example}
Here we give a example to verify the divergence result for the three projection methods and further confirm the first order rate to be optimal. Let us consider Symm's integral equation with $ \Omega $ is the disc with center at origin and radius $ a = e^{-\frac{1}{2}} $, that is,
\begin{equation*}
\boldsymbol{\gamma_a} (s) = e^{-\frac{1}{2}} (\cos s, \sin s),
\end{equation*}
\begin{equation*}
\ln \vert \boldsymbol{\gamma}_a(t) - \boldsymbol{\gamma}_a (s) \vert = \frac{1}{2} (\ln (4\sin^2 \frac{t-s}{2} ) - 1 ).
\end{equation*}
Now $ K = K_0 $, according to the Lemma 2.5, 2.6 (a), we have
\begin{equation*}
 Y^{LS}_n = K_0(X^{LS}_n) = X^{LS}_n = Y^{DLS}
 \end{equation*}
 \begin{equation*}
  = K^*_0 (Y^{DLS}_n) = K_0 (Y^{DLS}_n) = X^{DLS}_n = X^{BG}_n = Y^{BG}_n
\end{equation*}
This implies that the three projection method coincides. Thus, we only need to test Bubnov-Galerkin method.
\newline \indent Set
 \begin{equation*}
 b(t) = 1 + \sum_{0 \neq k \in \mathrm{Z}} \frac{1}{\vert k \vert^{\frac{1}{2} + \alpha }} e^{ikt}  \in L^2(0,2\pi) \setminus H^1(0,2\pi). \quad (\alpha \in (0,\frac{1}{2}))
 \end{equation*}
we can deduce that
\begin{equation*}
\Psi^\dagger_n = {K^{BG}_{0,n}}^{-1} P^{BG}_n b = 1 + \sum^n_{k = 1 } \vert k \vert^{\frac{1}{2} - \alpha } e^{ikt} + \sum^{-1}_{k = -n } \vert k \vert^{\frac{1}{2} - \alpha } e^{ikt}
\end{equation*}
and thus,
\begin{equation*}
c_1 n^{2 - 2\alpha} \leq \Vert \Psi^\dagger_n \Vert^2_{L^2} \leq c_2 n^2  \quad (\alpha \in (0,\frac{1}{2})).
\end{equation*}
This result verifies the divergence result and further confirm the first order divergence rate to be optimal by letting $ \alpha \to 0^+ $.

\section{Conclusion}
Our main contribution of this paper is as follows:
weakening the boundary $ \partial \Omega $ from analytic to $ C^3 $ class, we maintain the convergence and error analysis from analytic case. Besides, we transform all classical convergence criteria in Petrov-Galerkin methods into the support of divergence analysis, and thus, determine the least squares, dual least squares, Bubnov-Galerkin methods with Fourier basis all diverge to infinity at first order for $ b \in L^2(0,2\pi) \setminus H^1(0,2\pi) $. The divergence effect and optimality of first order rate are confirmed in Example 8.1.
As a supplementary result for divergence result in $ L^2 $ setting when $ b \in H^r(0,2\pi), \ 0 \leq r < 1 $, we provide convergence in $ H^{-1} $ setting for dual least squares method when $ b \in H^r(0,2\pi), \ 0 \leq r < 1 $, and convergence in $ H^{-\frac{1}{2}} $ setting for Bubnov-Galerkin method when $ b \in H^r(0,2\pi), \ \frac{1}{2} \leq r < 1 $.
\section*{Appendix A}
\begin{lemma}
 The $ (\{ X^{LS}_n \}_{n \in \mathrm{N}}, \{ Y^{LS}_n \}_{n \in \mathrm{N}} ) $ defined in (4.1) satisfies the completeness condition, that is,
\begin{equation*}
 P^{LS}_n \stackrel{s}{\longrightarrow} I_{L^2},\quad Q^{LS}_n \stackrel{s}{\longrightarrow} I_{L^2},
\end{equation*}
\end{lemma}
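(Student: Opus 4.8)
The plan is to reduce both strong-convergence statements to a single elementary principle: if $\{M_n\}$ is an increasing sequence of closed subspaces of a Hilbert space $H$ whose union $\bigcup_n M_n$ is dense in $H$, then the orthogonal projections $P_{M_n}$ converge strongly to $I_H$. This follows at once from the best-approximation property of the orthogonal projection: given $x \in H$ and $\varepsilon > 0$, density supplies an index $N$ and an element $m \in M_N$ with $\Vert x - m \Vert < \varepsilon$, and for every $n \geq N$ one has $m \in M_n$, whence $\Vert x - P_{M_n} x \Vert \leq \Vert x - m \Vert < \varepsilon$. Thus the entire task is to verify three hypotheses---nestedness, closedness, and density of the union---for each of the two families $\{X^{LS}_n\}$ and $\{Y^{LS}_n\}$.

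For $\{X^{LS}_n\}$ the verification is routine. The spaces $X^{LS}_n = \mathrm{span}\{e^{ikt}\}_{k=-n}^{n}$ are finite-dimensional, hence closed, and clearly nested, and their union is exactly the set of trigonometric polynomials, which is dense in $L^2(0,2\pi)$. The principle above then yields $P^{LS}_n \stackrel{s}{\longrightarrow} I_{L^2}$. (Alternatively, Lemma 2.1 with $s=0$ and any $r>0$ gives $\Vert x - P^{LS}_n x \Vert_{L^2} \leq n^{-r}\Vert x \Vert_{H^r} \to 0$ on the dense subspace $H^r$, and the uniform bound $\Vert P^{LS}_n \Vert = 1$ extends this to all of $L^2$.)

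For $\{Y^{LS}_n\}$, recall $Y^{LS}_n = K(X^{LS}_n)$. Nestedness follows from linearity of $K$ together with $X^{LS}_n \subseteq X^{LS}_{n+1}$. Closedness follows because $K$ is injective on $L^2(0,2\pi)$ (Lemma 2.6(b) with $s=1$), so $\dim Y^{LS}_n = \dim X^{LS}_n = 2n+1 < \infty$ and each $Y^{LS}_n$ is finite-dimensional. The only substantive point is the density of $\bigcup_n Y^{LS}_n$ in $L^2$, and this is where the range characterization of $K$ enters. Writing $D := \bigcup_n X^{LS}_n$ for the dense set of trigonometric polynomials, one has $\bigcup_n Y^{LS}_n = K(D)$. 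Since $K$ is bounded on $L^2$, for any $y = Kx \in \mathcal{R}(K)$ one may choose $x_j \in D$ with $x_j \to x$, obtaining $Kx_j \to y$; hence $\overline{K(D)} \supseteq \mathcal{R}(K)$ and therefore $\overline{K(D)} = \overline{\mathcal{R}(K)}$. By Lemma 2.6(b) with $s=1$ we have $\mathcal{R}(K) = H^1(0,2\pi)$, which is dense in $L^2(0,2\pi)$, so $\overline{\bigcup_n Y^{LS}_n} = L^2(0,2\pi)$. A second application of the principle gives $Q^{LS}_n \stackrel{s}{\longrightarrow} I_{L^2}$.

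The main obstacle---indeed the only nonelementary ingredient---is the density of the image family $\bigcup_n K(X^{LS}_n)$ in $L^2$; once the range identity $\mathcal{R}(K) = H^1(0,2\pi)$ and the density-transfer argument for the bounded operator $K$ are in hand, the remainder is the standard increasing-subspace projection lemma applied twice.
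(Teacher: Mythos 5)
Your proof is correct and follows essentially the same route as the paper's: both establish density of $\bigcup_n Y^{LS}_n$ by combining the range identity $\mathcal{R}(K)=H^1(0,2\pi)$ (Lemma 2.6(b), $s=1$), the density of $H^1$ in $L^2$, and continuity of $K$ to transfer density from $\bigcup_n X^{LS}_n$ to its image, then conclude via the best-approximation property of orthogonal projections onto nested subspaces. Your write-up is somewhat more explicit about the nestedness/closedness hypotheses and the $\varepsilon$--$N$ step, but the substance is identical.
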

\begin{proof}
Since
\begin{equation*}
H^1(0,2\pi) = K ( L^2(0,2\pi) ) \quad (\textrm{by Lemma 2.6 (b) with} \ s = 1)
\end{equation*}
\begin{equation*}
= K (\overline{\bigcup_{n \in \mathrm{N}}  X^{LS}_n }) \subseteq \overline{\bigcup_{n \in \mathrm{N}} K( X^{LS}_n) }, \quad (\textrm{by Lemma 2.6 (a)}),
\end{equation*}
and thus,
\begin{equation*}
L^2(0,2\pi) = \overline{H^1(0,2\pi) } \subseteq \overline{\bigcup_{n \in \mathrm{N}} K( X^{LS}_n) } \subseteq L^2(0,2\pi),
\end{equation*}
we have
\begin{equation*}
X^{LS}_n \subseteq X^{LS}_{n+1}, \quad \overline{\bigcup_{n \in \mathrm{N}}  X^{LS}_n } = L^2(0,2\pi)
 \end{equation*}
 and
 \begin{equation*}
 Y^{LS}_n \subseteq Y^{LS}_{n+1}, \quad  \overline{\bigcup_{n \in \mathrm{N}}  Y^{LS}_n } = L^2(0,2\pi).
\end{equation*}
 Then for arbitrary $ x = y \in L^2(0,2\pi) $, there exist $ x^{LS}_n \in X^{LS}_n $ and $ y^{LS}_n \in Y^{LS}_n $ such that $ x^{LS}_n, y^{LS}_n \stackrel{s}{\to} x $. Define orthogonal projection operators $ P^{LS}_n := P_{X^{LS}_n}, Q^{LS}_n:= Q_{Y^{LS}_n } $. Since $ \Vert x - P^{LS}_n x \Vert_{L^2} \leq \Vert x - x^{LS}_n \Vert_{L^2} \to 0 $, it follows that $  P^{LS}_n \stackrel{s}{\longrightarrow} I_{L^2} $. The same deduction also gives $ Q^{LS}_n \stackrel{s}{\longrightarrow} I_{L^2} $
\end{proof}
\section*{Appendix B}
\begin{lemma}
Let $ \boldsymbol{\gamma} = \boldsymbol{\gamma}(s) = (a(s), b(s)) $ be three times continuously differentiable, then $ k = k(t,s) $ defined in (2.1) can be extended to $ C^2 ([0,2\pi] \times [0,2\pi]) $, that is, $ 2\pi - $ periodic, two times  continuously differentiable with respect to both variables. In particular,
\begin{equation*}
\lim_{s \to t } k (t,s) = - \frac{1}{\pi} (\ln \vert \boldsymbol{\gamma}' (t) \vert + \frac{1}{2}), \quad \lim_{s \to t } \frac{\partial}{\partial t } k (t,s) = - \frac{1}{2\pi} \frac{ \boldsymbol{\gamma}' (t) \cdot {\boldsymbol{\gamma}}'' (t)} { \vert \boldsymbol{\gamma}' (t) \vert^2 },
\end{equation*}
\begin{equation*}
\lim_{s \to t } \frac{\partial^2 }{\partial t^2 } k (t,s)  = - \frac{1}{\pi} \frac{1}{\vert \boldsymbol{\gamma}' (t) \vert^4 } \times
\end{equation*}
\begin{equation*}
[ \frac{1}{12} \vert \boldsymbol{\gamma}' (t) \vert^4 + \vert \boldsymbol{\gamma}' (t) \vert^2 ( \frac{1}{3} {\boldsymbol{\gamma}}''' (t) \cdot \boldsymbol{\gamma}' (t) + \frac{1}{4} \vert \boldsymbol{\gamma}'' (t) \vert^2  ) + \frac{1}{2} ( \boldsymbol{\gamma}'(t) \cdot \boldsymbol{\gamma}''(t)  )^2 ].
\end{equation*}
\end{lemma}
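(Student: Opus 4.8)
The plan is to collapse the two logarithms into a single quotient and then exhibit a factorization that cancels the apparent diagonal singularity. Starting from the splitting (2.4) and writing $-\frac{1}{\pi}\ln|\boldsymbol{\gamma}(t)-\boldsymbol{\gamma}(s)| = -\frac{1}{2\pi}\ln|\boldsymbol{\gamma}(t)-\boldsymbol{\gamma}(s)|^2$, I would collect the terms into
\[
k(t,s) = -\frac{1}{2\pi}\ln\frac{|\boldsymbol{\gamma}(t)-\boldsymbol{\gamma}(s)|^2}{4\sin^2\frac{t-s}{2}} - \frac{1}{2\pi} =: -\frac{1}{2\pi}\ln h(t,s) - \frac{1}{2\pi},
\]
valid for $t\neq s$ modulo $2\pi$. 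Wherever $\boldsymbol{\gamma}(t)\neq\boldsymbol{\gamma}(s)$ and $t-s\notin 2\pi\mathrm{Z}$, each logarithm has a strictly positive $C^3$ argument, so $k$ is already $C^2$ there; the whole difficulty is concentrated on the diagonal $t=s$ and its periodic images, where $h$ is a $0/0$ form.

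The key step is to show that $h$ extends to a strictly positive $C^2$ function across the diagonal. Using the integral form of Taylor's theorem I would write $\boldsymbol{\gamma}(t)-\boldsymbol{\gamma}(s) = (t-s)\,\boldsymbol{G}(t,s)$ with $\boldsymbol{G}(t,s):=\int_0^1 \boldsymbol{\gamma}'(s+\theta(t-s))\,d\theta$. Since $\boldsymbol{\gamma}\in C^3$ forces $\boldsymbol{\gamma}'\in C^2$, differentiation under the integral sign gives $\boldsymbol{G}\in C^2$ in both variables, with $\boldsymbol{G}(t,t)=\boldsymbol{\gamma}'(t)\neq 0$ by the standing assumption $|\dot{\boldsymbol{\gamma}}|>0$. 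Likewise $4\sin^2\frac{t-s}{2}=(t-s)^2\,\phi(t-s)^2$, where $\phi(u):=\frac{\sin(u/2)}{u/2}$ extends to a real-analytic function that is positive for $|u|<2\pi$. The factor $(t-s)^2$ then cancels, leaving
\[
h(t,s)=\frac{|\boldsymbol{G}(t,s)|^2}{\phi(t-s)^2},
\]
a quotient of a $C^2$ function by a smooth positive one, hence $C^2$ and strictly positive near the diagonal (its diagonal value is $|\boldsymbol{\gamma}'(t)|^2>0$). Composing with $\ln$, which is $C^\infty$ on $(0,\infty)$, shows $k=-\frac{1}{2\pi}\ln h-\frac{1}{2\pi}$ is $C^2$ near the diagonal; together with the off-diagonal case this gives $k\in C^2$. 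Periodicity in both variables is inherited from the $2\pi$-periodicity of $\boldsymbol{\gamma}$ and of $\sin^2\frac{t-s}{2}$, and the corners $(0,2\pi),(2\pi,0)$ are just the diagonal of the periodic extension, covered by the identical local argument.

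It remains to read off the three diagonal limits, which I would obtain by Taylor-expanding $h$ (equivalently $\ln h$) in $u=t-s$ about $u=0$. For the value, $\boldsymbol{G}(t,t)=\boldsymbol{\gamma}'(t)$ and $\phi(0)=1$ give $h(t,t)=|\boldsymbol{\gamma}'(t)|^2$, whence $\lim_{s\to t}k(t,s)=-\frac{1}{\pi}\big(\ln|\boldsymbol{\gamma}'(t)|+\tfrac12\big)$, matching the stated formula. For $\partial_t k$ and $\partial_t^2 k$ I would expand $\boldsymbol{\gamma}(t)-\boldsymbol{\gamma}(s)$ to third order and $\phi$ to the matching order, differentiate $-\frac{1}{2\pi}\ln h$, and let $s\to t$; the occurrence of $\boldsymbol{\gamma}'''$ in the second-derivative formula is precisely why the $C^3$ hypothesis is sharp. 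I expect this final computation to be the only genuine obstacle: it is conceptually routine but requires careful bookkeeping of the third-order terms of $\boldsymbol{\gamma}$ and the fourth-order terms of $\sin$ to reproduce the displayed closed form. The structural assertion that $k\in C^2$ is settled entirely by the cancellation of $(t-s)^2$ in the preceding paragraph.
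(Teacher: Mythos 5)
Your argument is correct, and it takes a genuinely different route from the paper's. The paper works directly with $\bar k(t,s)=\ln\bigl(\vert\boldsymbol{\gamma}(t)-\boldsymbol{\gamma}(s)\vert/(2\sin\frac{s-t}{2})\bigr)$, differentiates it once and twice in $t$, and establishes each diagonal limit separately by matching Taylor expansions with Peano remainders of the numerator, of $\frac{1}{2}\cot\frac{s-t}{2}$, and of $\vert\boldsymbol{\gamma}(t)-\boldsymbol{\gamma}(s)\vert^2$ and its square — heavy bookkeeping carried out only for the one-sided limit $s\to t^{+}$. You instead factor $\boldsymbol{\gamma}(t)-\boldsymbol{\gamma}(s)=(t-s)\boldsymbol{G}(t,s)$ with $\boldsymbol{G}$ the Hadamard-type integral remainder (which is $C^2$ because $\boldsymbol{\gamma}'\in C^2$) and $4\sin^2\frac{t-s}{2}=(t-s)^2\phi(t-s)^2$, cancel $(t-s)^2$, and write $k=-\frac{1}{2\pi}\ln h-\frac{1}{2\pi}$ with $h=\vert\boldsymbol{G}\vert^2/\phi^2$ strictly positive and $C^2$. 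This settles the $C^2$ extension in one stroke and in both variables simultaneously, which is a real gain: the paper's computation only addresses $\partial_t$ and $\partial_t^2$, and existence of limits of derivatives along $s\to t^{+}$ does not by itself yield a two-variable $C^2$ extension without a further mean-value argument, so your version is actually the more complete proof of the structural claim. The one thing you have not done is verify the two displayed closed-form expressions for $\lim_{s\to t}\partial_t k$ and $\lim_{s\to t}\partial_t^2 k$; you describe the expansion of $h$ to the needed order but defer the bookkeeping, which is exactly the content of the paper's chain of expansions (its equations (8.2)--(8.8)). Since those explicit formulas are part of the lemma's assertion, that computation still has to be carried out to call the proof finished; everything up to that point is sound.
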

\begin{proof}
Recall the definition,
\begin{equation*}
k(t,s) = - \frac{1}{\pi} (\ln \frac{\vert \gamma(t) -\gamma(s) \vert}{\vert 2 \sin (\frac{t-s}{2})\vert} + \frac{1}{2}), \quad (t \neq s )
\end{equation*}
For above three limits, we only prove the single directional limit for $ s \to t^+ $. The first limit follows directly from mean-value theorem. Assume that $  s > t, s \to t^+ $, set
\begin{equation*}
\bar k(t,s) =  \ln \frac{\vert \gamma(t) -\gamma(s) \vert}{ 2 \sin \frac{s-t}{2} }, \quad (t \neq s )
\end{equation*}
then, for $ t \neq s $,
\begin{equation}
\frac{\partial}{\partial t} \bar k(t,s) =
\frac{(a(t) - a(s)) a'(t) + (b(t) - b(s))b'(t) }{\vert \gamma(t) - \gamma(s) \vert^2} + \frac{1}{2} \cot \frac{s-t}{2}.
\end{equation}
Using Taylor expansion with Peano residual, we have
\begin{equation*}
(a(t) - a(s)) a'(t) + (b(t) - b(s))b'(t) = - \vert \boldsymbol{\gamma}' (t) \vert^2 (s-t)
\end{equation*}
\begin{equation}
- \frac{1}{2} \boldsymbol{\gamma}'(t) \cdot \boldsymbol{\gamma}''(t) (s-t)^2 + o((s-t)^2),
\end{equation}
\begin{equation*}
\frac{1}{2} \cot \frac{s-t}{2} = \frac{1}{s-t} \frac{1 - \frac{1}{8}(s-t)^2 }{1- \frac{1}{24} (s-t)^2 + o((s-t)^2) }
\end{equation*}
\begin{equation}
= \frac{1}{\boldsymbol{\gamma}(t) - \boldsymbol{\gamma}(s) \vert^2} \times \frac{1}{s-t} \frac{1 - \frac{1}{8}(s-t)^2 }{1- \frac{1}{24} (s-t)^2 + o((s-t)^2) } \vert \boldsymbol{\gamma}(t) - \boldsymbol{\gamma}(s) \vert^2,
\end{equation}
and
\begin{equation}
\vert \boldsymbol{\gamma}(t) - \boldsymbol{\gamma}(s) \vert^2 = \vert \boldsymbol{\gamma}' (t) \vert^2 (s-t)^2
+ \boldsymbol{\gamma}'(t) \cdot \boldsymbol{\gamma}''(t) (s-t)^3 + o((s-t)^3).
\end{equation}
Thus, using (8.2)-(8.4), we transform (8.1) into
\begin{equation*}
\frac{\partial}{\partial t} \bar k(t,s) =
\frac{  \frac{1}{2} \frac{1 - \frac{1}{8}(s-t)^2}{1 - \frac{1}{24}(s-t)^2 + o(s-t)^2 } \boldsymbol{\gamma}'(t) \cdot \boldsymbol{\gamma}''(t) (s-t)^2 + o((s-t)^2) }{\vert \boldsymbol{\gamma}'(t) \vert^2 (s-t)^2 + o((s-t)^2) }
\quad (t \neq s ).
\end{equation*}
Then the second limit follows.
\newline \indent Differentiating both sides in (8.1) gives
\begin{equation}
\frac{\partial^2}{\partial t^2} \bar k(t,s) =
\frac{\partial}{\partial t} \bar L(t,s) + \frac{\partial}{\partial t} \frac{1}{2} \cot \frac{s-t}{2},
\end{equation}
where
\begin{equation*}
\bar L(t,s) =
\frac{(a(t) - a(s)) a'(t) + (b(t) - b(s))b'(t) }{\vert \gamma(t) - \gamma(s) \vert^2}.
\end{equation*}
The computations yields that
\begin{equation*}
\frac{\partial}{\partial t} \bar L(t,s) = \frac{1}{\vert \boldsymbol{\gamma} (t) - \boldsymbol{\gamma} (s)\vert^4} \times  \{ - \vert \boldsymbol{\gamma}' (t) \vert^4 (s-t)^2 -
\end{equation*}
\begin{equation*}
 2 \vert \boldsymbol{\gamma}'(t) \vert^2 (\boldsymbol{\gamma}'(t) \cdot \boldsymbol{\gamma}'' (t)) (s-t)^3 + [ \vert \boldsymbol{\gamma}'(t) \vert^2 ( -\frac{1}{3} \boldsymbol{\gamma}' (t) \cdot \boldsymbol{\gamma}'''(t) - \frac{1}{4} \vert \boldsymbol{\gamma}''(t) \vert^2 ) -
\end{equation*}
\begin{equation}
 \frac{3}{2} (\boldsymbol{\gamma}''(t) \cdot \boldsymbol{\gamma}' (t))^2 ] (s-t)^4
 + o((s-t)^4)  \}
\end{equation}
and
\begin{equation*}
\frac{\partial}{\partial t} \frac{1}{2} \cot \frac{s-t}{2} = \frac{1}{(s-t)^2} \cdot \frac{1}{1 - \frac{(s-t)^2}{12} + o((s-t)^2)}
\end{equation*}
\begin{equation}
= \frac{1}{\vert \boldsymbol{\gamma} (t) - \boldsymbol{\gamma} (s)\vert^4} \times  \frac{\vert \boldsymbol{\gamma} (t) - \boldsymbol{\gamma} (s)\vert^4}{(s-t)^2} \cdot \frac{1}{1 - \frac{(s-t)^2}{12} + o((s-t)^2)},
\end{equation}
where
\begin{equation*}
\vert \boldsymbol{\gamma} (t) - \boldsymbol{\gamma} (s)\vert^4 = \{ \vert \boldsymbol{\gamma}' (t) \vert^4 (s-t)^4 +
\end{equation*}
\begin{equation*}
 2 \vert \boldsymbol{\gamma}'(t) \vert^2 (\boldsymbol{\gamma}'(t) \cdot \boldsymbol{\gamma}'' (t)) (s-t)^5 + [ \vert \boldsymbol{\gamma}'(t) \vert^2 ( \frac{2}{3} \boldsymbol{\gamma}' (t) \cdot \boldsymbol{\gamma}'''(t) + \frac{1}{2} \vert \boldsymbol{\gamma}''(t) \vert^2 ) +
\end{equation*}
\begin{equation}
 (\boldsymbol{\gamma}''(t) \cdot \boldsymbol{\gamma}' (t))^2 ] (s-t)^6.
 + o((s-t)^6)  \}
\end{equation}
Using (8.5)-(8.8) gives that
\begin{equation*}
\frac{\partial^2}{\partial t^2} \bar k(t,s)
= \frac{1}{\vert \boldsymbol{\gamma} (t) - \boldsymbol{\gamma} (s)\vert^4} \times \{
\frac{ \frac{(s-t)^2}{12} + o((s-t)^2)}{1 - \frac{(s-t)^2}{12} + o((s-t)^2)} \vert \boldsymbol{\gamma}'(t) \vert^4 (s-t)^4 +
\end{equation*}
\begin{equation*}
 \vert \boldsymbol{\gamma}'(t) \vert^2 [ ( \frac{2}{3} \frac{1}{1 - \frac{(s-t)^2}{12} + o((s-t)^2)} - \frac{1}{3} ) \boldsymbol{\gamma}' (t) \cdot \boldsymbol{\gamma}'''(t) +
 \end{equation*}
\begin{equation*}
 ( \frac{1}{2}  \frac{1}{1 - \frac{(s-t)^2}{12} + o((s-t)^2)} -\frac{1}{4}) \vert \boldsymbol{\gamma}''(t) \vert^2 ]  (s-t)^4
 +
 \end{equation*}
\begin{equation*}
 (  \frac{1}{1 - \frac{(s-t)^2}{12} + o((s-t)^2)} - \frac{3}{2})  (\boldsymbol{\gamma}''(t) \cdot \boldsymbol{\gamma}' (t))^2 )
 (s-t)^4 + o((s-t)^4) \}.
\end{equation*}
Thus, the third limit follows.
\end{proof}

\section*{References}
\bibliographystyle{elsarticle-num-names.bst}

\end{document}